\newcommand{\supp}{\mathrm{supp}}
\newtheorem{theorem}{Theorem}[section]
\newtheorem{lemma}[theorem]{Lemma}
\newtheorem{corollary}[theorem]{Corollary}
\theoremstyle{definition}
\newtheorem{definition}[theorem]{Definition}
\theoremstyle{remark}
\newtheorem{remark}[theorem]{Remark}
\numberwithin{equation}{section}
\title{On the convergence of type I Hermite-Pad\'e approximants}
\date{\today}
\author{G. L\'{o}pez Lagomasino\footnotemark[2], S. Medina Peralta\footnotemark[2]}
\begin{document}

\maketitle
\renewcommand{\thefootnote}{\fnsymbol{footnote}}
\footnotetext[2]{Departamento de
Matem\'{a}ticas, Universidad Carlos III de Madrid, Avda. Universidad
30, 28911 Legan\'{e}s, Madrid, Spain. email: lago\symbol{'100}math.uc3m.es.\\
Both authors were partially supported
by research grant MTM2012-36372-C03-01 of Ministerio de Econom\'{\i}a y Competitividad, Spain.}

\begin{abstract}
Pad\'e approximation has two natural extensions to vector rational approximation through the so called type I and type II Hermite-Pad\'e approximants. The convergence properties of type II Hermite-Pad\'e approximants have been studied.  For such approximants Markov and Stieltjes type theorems are available. To the present,  such results have not been obtained for type I approximants. In this paper, we provide Markov and Stieltjes type theorems on the convergence of type I Hermite-Pad\'e approximants for Nikishin systems of functions.
\end{abstract}

\textbf{Keywords:} Multiple orthogonal polynomials, Nikishin systems,
type I Hermite-Pad\'{e} approximants.

\textbf{AMS classification:} Primary 30E10, 42C05; Secondary 41A20.

\maketitle

\section{Introduction}
\label{section:intro}

Let $s$ be a finite  Borel measure with constant (not neessarily positive) sign whose support $\mbox{supp}(s)$ contains infinitely many points and is contained in the real line $\mathbb{R}$. If $\mbox{supp}(s)$ is an unbounded set we assume additionally that   $x^n \in L_1(s), n \in \mathbb{N}$.  By $\Delta = \mbox{Co}(\mbox{supp}(s))$ we denote the smallest interval which contains  $\supp(s)$.   We denote this class of measures by ${\mathcal{M}}(\Delta)$. Let
\[ \widehat{s}(z) = \int\frac{d s(x)}{z-x}
\]
be the Cauchy transform of $s$.

Given any positive integer $n \in \mathbb{N}$ there exist polynomials $Q_n,P_n$
satisfying:
\begin{itemize}
\item $\deg Q_n \leq n,\quad  \deg P_n \leq n-1, \quad Q_n \not\equiv 0,$
\item $(Q_n \widehat{s} -P_n)(z) = \mathcal{O}(1/z^{n+1}),\quad  z \to \infty.$
\end{itemize}
The ratio  $\pi_n = P_n/Q_n$ of any two such polynomials defines a unique rational function  called the $n$th term of the diagonal sequence of Pad\'e approximants to $\widehat{s}$. Cauchy transforms of measures are important: for example, many elementary functions may be expressed through them, the resolvent function of a bounded selfadjoint operator adopts that form, and they characterize all functions holomorphic in the upper half plane whose image lies in the lower half plane and can be extended continuously to the complement of a finite segment $[a,b]$ of the real line taking negative values for $z < a$ and positive values for $z > b$ (then $\mbox{supp}(s) \subset [a,b]$), see \cite[Theorem A.6]{KN}. Providing efficient methods for their approximation is a central question in the theory of rational approximation.

When $\Delta$ is bounded,  A.A. Markov proved in \cite{Mar} (in the context of the theory of continued fractions) that
\begin{equation} \label{Mar} \lim_{n \to \infty} \pi_n(z) = \widehat{s}(z)
\end{equation}
uniformly on each compact subset of $\overline{\mathbb{C}} \setminus \Delta$.  It is easy to deduce that this limit takes place with geometric rate. When $\Delta$ is a half line,  T.J. Stieltjes in  \cite{Sti} showed that \eqref{Mar} takes place if and only if the moment problem for the sequence $\left(c_n\right)_{n\geq 0}, c_n = \int x^n ds(x),$ is determinate. It is well known that the moment problem for measures of bounded support  is   determinate; therefore, Stieltjes' theorem contains Markov's result. In \cite{Car}, T. Carleman  proved  when $\Delta \subset \mathbb{R}_+$ that
\begin{equation} \label{Carle} \sum_{n \geq 1} |c_{n}|^{-1/2n} = \infty
\end{equation}
is sufficient for the moment problem to be determinate. For an arbitrary measure  $s \in \mathcal{M}(\Delta)$, where $\Delta$ is contained in a half line, we say that it satisfies Carleman's condition if after an affine transformation which takes $\Delta$ into $\mathbb{R}_+$ the image measure satisfies Carleman's condition.

Pad\'e  approximation has two natural extensions to the case of vector rational approximation.  These extensions  were introduced by Hermite in order to study the transcendency of $e$. Other applications in number theory have been obtained. See \cite{Ass} for a survey of results in this direction. Recently, these approximants and their associated Hermite-Pad\'e polynomials have appeared in a natural way in certain models coming from probability theory and mathematical physics. A summary of this type of applications can be found in \cite{Kuij}

Given a system of finite Borel measures $S = (s_1,\ldots,s_m)$ with constant sign and a multi-index ${\bf n} = (n_1,\ldots,n_m) \in \mathbb{Z}_+^m \setminus \{{\bf 0}\}, |{\bf n}|= n_1+\cdots+n_m$, where $\mathbb{Z}_+$ denotes the set of non-negative integers and $\bf 0$ the $m$-dimensional zero vector, their exist polynomials $a_{{\bf n},j}, j=0,\ldots,m,$ not all identically equal to zero, such that:
\begin{itemize}
\item $\deg a_{{\bf n},j} \leq n_j -1, j=1,\ldots,m, \quad \deg a_{{\bf n},0} \leq \max(n_j) -2,$
\item $a_{{\bf n},0}(z) + \sum_{j=1}^m a_{{\bf n},j}(z) \widehat{s}_j(z) = \mathcal{O}(1/z^{|{\bf n}|}),\,\,\, z \to \infty \,\,\,(\deg a_{{\bf n},j} \leq -1$ means that $a_{{\bf n},j} \equiv 0$).
\end{itemize}
Analogously, there exist polynomials $Q_{\bf n}, P_{{\bf n},j}, j=1,\ldots,m$, satisfying:
\begin{itemize}
\item $\deg Q_{\bf n} \leq |{\bf n}|, Q_{\bf n} \not\equiv 0, \quad \deg P_{{\bf n},j} \leq |{\bf n}|-1, j=1,\ldots,m,$
\item $Q_{\bf n}(z) \widehat{s}_j (z) - P_{{\bf n},j}(z) = \mathcal{O}(1/z^{n_j +1}), \quad z\to \infty, \quad j=1,\ldots,m.$
\end{itemize}
Traditionally, the systems of polynomials $(a_{{\bf n},0}, \ldots,a_{{\bf n},m})$ and $(Q_{\bf n},P_{{\bf n},1},\ldots,P_{{\bf n},m})$ have been called type I and type II Hermite-Pad\'e approximants (polynomials) of $(\widehat{s}_1,\ldots,\widehat{s}_m)$, respectively. When $m=1$ both definitions reduce to that of classical Pad\'e approximation.

From the definition, type II Hermite-Pad\'e approximation is easy to view as an approximating scheme of the vector function
$(\widehat{s}_1,\ldots,\widehat{s}_m)$ by considering a sequence of vector rational functions of the form $(P_{{\bf n},1}/Q_{\bf n},\ldots,P_{{\bf n},m}/Q_{\bf n}), {\bf n} \in \Lambda \subset \mathbb{Z}_+^m$, where $Q_{\bf n}$ is a common denominator for all components. Regarding type I, it is not obvious what is the object to be approximated or even what should be considered as the approximant. Our goal is to clarify these questions providing straightforward analogues of the Markov and Stieltjes theorems.

Before stating our main result, let us introduce what is called a Nikishin system of measures to which we will restrict our study.
Let $\Delta_{\alpha}, \Delta_{\beta}$ be two intervals contained in the real line which have at most one point in common, $\sigma_{\alpha} \in {\mathcal{M}}(\Delta_{\alpha}), \sigma_{\beta} \in {\mathcal{M}}(\Delta_{\beta})$, and $\widehat{\sigma}_{\beta} \in L_1(\sigma_{\alpha})$. With these two measures we define a third one as follows (using the differential notation)
\[ d \langle \sigma_{\alpha},\sigma_{\beta} \rangle (x) := \widehat{\sigma}_{\beta}(x) d\sigma_{\alpha}(x).
\]
Above, $\widehat{\sigma}_{\beta}$ denotes the Cauchy transform of the measure $\sigma_{\beta}$.  The more appropriate notation $\widehat{\sigma_{\beta}}$ causes space consumption and aesthetic inconveniences. We need to take consecutive products of measures; for example,
\[\langle \sigma_{\gamma},  \sigma_{\alpha},\sigma_{\beta} \rangle :=\langle \sigma_{\gamma}, \langle \sigma_{\alpha},\sigma_{\beta} \rangle \rangle. \]
 Here, we assume not only that $\widehat{\sigma}_{\beta} \in L_1(\sigma_{\alpha})$ but also $\langle \sigma_{\alpha},\sigma_{\beta} \widehat{\rangle} \in L_1(\sigma_{\gamma})$ where $\langle \sigma_{\alpha},\sigma_{\beta} \widehat{\rangle}$ denotes the Cauchy transform of $\langle \sigma_{\alpha},\sigma_{\beta}  {\rangle}$. Inductively, one defines products of a finite number of measures.

\begin{definition} \label{Nikishin} Take a collection  $\Delta_j, j=1,\ldots,m,$ of intervals such that, for   $j=1,\ldots,m-1$
\[ \Delta_j \cap \Delta_{j+1} = \emptyset, \qquad \mbox{or} \qquad \Delta_j \cap \Delta_{j+1} = \{x_{j,j+1}\},
\]
where $x_{j,j+1}$ is a single point. Let $(\sigma_1,\ldots,\sigma_m)$ be a system of measures such that $\mbox{Co}(\supp (\sigma_j)) = \Delta_j, \sigma_j \in {\mathcal{M}}(\Delta_j), j=1,\ldots,m,$  and
\begin{equation} \label{eq:autom}
\langle \sigma_{j},\ldots,\sigma_k  {\rangle} := \langle \sigma_j,\langle \sigma_{j+1},\ldots,\sigma_k\rangle\rangle\in {\mathcal{M}}(\Delta_j),  \qquad  1 \leq j < k\leq m.
\end{equation}
When $\Delta_j \cap \Delta_{j+1} = \{x_{j,j+1}\}$ we also assume that $x_{j,j+1}$ is not a mass point of either $\sigma_j$ or $\sigma_{j+1}$.
We say that $(s_1,\ldots,s_m) = {\mathcal{N}}(\sigma_1,\ldots,\sigma_m)$, where
\[ s_1 = \sigma_1, \quad s_2 = \langle \sigma_1,\sigma_2 \rangle, \ldots \quad , s_m = \langle \sigma_1, \sigma_2,\ldots,\sigma_m  \rangle
\]
is the Nikishin system of measures generated by $(\sigma_1,\ldots,\sigma_m)$.
\end{definition}

Initially, E.M. Nikishin in \cite{Nik}  restricted himself to measures with bounded support and no intersection points between consecutive $\Delta_j$.
Definition \ref{Nikishin} includes interesting examples  which appear in practice (see, \cite[Subsection 1.4]{FL4}). We follow the approach of \cite[Definition 1.2]{FL4}  assuming additionally the existence of all the moments of the generating measures. This is done only for the purpose of simplifying the presentation without affecting too much the generality. However, we wish to point out that the results of this paper have appropriate formulations with the definition given in \cite{FL4} of a Nikishin system.

When $m=2$, for multi-indices of the form $ {\bf n } = (n,n)$ E.M. Nikishin proved  in \cite{Nik}  that
\[ \lim_{n \to \infty} \frac{P_{{\bf n},j}(z)}{Q_{\bf n}(z)} = \widehat{s}_j(z), \qquad j=1,2,
\]
uniformly on each compact subset of $\overline{\mathbb{C}} \setminus \Delta_1$
In \cite{BL} this result was extended to any Nikishin system of $m$ measures including generating measures with unbounded support. The convergence for more general  sequences of multi-indices was treated in \cite{FL1}, \cite{FL2} and \cite{GRS}.

In \cite[Lemma 2.9]{FL4} it was shown that if $(\sigma_1,\ldots,\sigma_m)$ is a generator of a Nikishin system then $(\sigma_m,\ldots,\sigma_1)$ is also a generator (as well as any subsystem of consecutive measures drawn from them). When the supports are bounded and consecutive supports do not intersect this is trivially true. In the following, for $1\leq j\leq k\leq m$ we denote
\[ s_{j,k} := \langle \sigma_j,\sigma_{j+1},\ldots,\sigma_k \rangle, \qquad s_{k,j} := \langle \sigma_k,\sigma_{k-1},\ldots,\sigma_j \rangle.
\]

To state our main results, the natural framework is that of multi-point type I Hermite-Pad\'e approximation.

\begin{definition} \label{type1} Let $(s_{1,1},\ldots,s_{1,m}) = \mathcal{N}(\sigma_1,\ldots,\sigma_m), {\bf n} = (n_1,\ldots,n_m) \in \mathbb{Z}_+^{m} \setminus \{{\bf 0}\},$ and $w_{\bf n},$ $ \deg w_{\bf n} \leq |{\bf n}| + \max(n_j)-2,$ a polynomial with real coefficients whose zeros lie in $\mathbb{C} \setminus\Delta_1$, be given. We say that $\left(a_{{\bf n},0}, \ldots, a_{{\bf n},m}\right)$ is a  type I multi-point Hermite-Pad\'e approximation of $(\widehat{s}_{1,1},\ldots,\widehat{s}_{1,m})$ with respect to $w_{\bf n}$ if:
\begin{itemize}
\item[i)] $\deg a_{{\bf n},j} \leq n_j -1, j=1,\ldots,m, \quad \deg a_{{\bf n},0} \leq n_0 -1, \quad n_0 := \max_{j=1,\ldots,m} (n_j) -1,\quad$ not all identically equal to $0$ ($n_j = 0$ implies that $a_{{\bf n},j} \equiv 0$),
\item[ii)] $\mathcal{A}_{{\bf n},0}/w_{\bf n} \in \mathcal{H}(\mathbb{C} \setminus \Delta_1)\quad$ and  $\quad \mathcal{A}_{{\bf n},0}(z)/w_{\bf n}(z)= \mathcal{O}(1/z^{|{\bf n}|}), \quad z\to \infty, \quad$ where
    \[ \mathcal{A}_{{\bf n},j}  := a_{{\bf n},j}  + \sum_{k= j+1}^m a_{{\bf n},k}  \widehat{s}_{j+1,k}(z), \quad j=0,\ldots,m-1, \qquad \mathcal{A}_{{\bf n},m}  :=  {a}_{{\bf n},m}.
    \]
\end{itemize}
\end{definition}

If $\deg w_{\bf n} = |{\bf n}| + \max(n_j)-2$ the second part of ii) is automatically fulfilled. Should $\deg w_{\bf n} = N  < |{\bf n}| + \max(n_j)-2$ then $|{\bf n}| + \max(n_j)-2 -N$ (asymptotic) interpolation conditions are imposed at $\infty$. In general $|{\bf n}| + \max(n_j)-2$ interpolation conditions are imposed at points in $({\mathbb{C}}\setminus \Delta_1) \cup \{\infty\}$. The total number of free parameters (the coefficients of the polynomials $a_{{\bf n},j}, j=0.\ldots,m$) equals $|{\bf n}| + \max(n_j)-1$; therefore, the homogeneous linear system of equations to be solved in order that i)-ii) take place always has a non-trivial solution. Notice that when $w_{\bf n} \equiv 1$ we recover the definition given above for classical type I Hermite-Pad\'e approximation.

An analogous definition can be given for type II multi-point Hermite-Pad\'e approximants but we will not dwell into this. Algebraic and analytic properties regarding uniqueness, integral representations, asymptotic behavior, and orthogonality conditions satisfied by  type I and type II Hermite-Pad\'e approximants have been studied, for example,  in \cite{BL}, \cite{DrSt0}, \cite{DrSt1}, \cite{DrSt2}, \cite{FL1}, \cite{FL2}, \cite{FL3}, \cite{FL4}, \cite{GRS}, \cite{Nik0}, and \cite[Chapter 4]{NS}, which include the case of multi-point approximation.

Let $\stackrel{\circ}{\Delta}$ denote the interior of $\Delta$ with the Euclidean topology of the real line. We have

\begin{theorem} \label{unicidad} Let $(s_{1,1},\ldots,s_{1,m}) = \mathcal{N}(\sigma_1,\ldots,\sigma_m), {\bf n} = (n_1,\ldots,n_m) \in \mathbb{Z}_+^{m} \setminus \{{\bf 0}\},$ and $w_{\bf n},$ $ \deg w_{\bf n} \leq |{\bf n}| + \max(n_j)-2,$ a polynomial with real coefficients whose zeros lie in $\mathbb{C} \setminus\Delta_1$, be given. The type I multi-point Hermite-Pad\'e approximation $\left(a_{{\bf n},0}, \ldots, a_{{\bf n},m}\right)$ of $(\widehat{s}_{1,1},\ldots,\widehat{s}_{1,m})$ with respect to $w_{\bf n}$ is uniquely determined except for a constant factor, and $\deg a_{{\bf n},j} = n_j -1, j=0,\ldots,m$. Moreover
\begin{equation} \label{ortog}
\int x^{\nu} \mathcal{A}_{{\bf n},1}(x) \frac{d\sigma_1(x)}{w_{\bf n}(x)} = 0, \qquad \nu = 0,\ldots,|{\bf n}| -2,
\end{equation}
which implies that $\mathcal{A}_{{\bf n},1}$ has exactly $|{\bf n}| -1$ simple zeros in  $\stackrel{\circ}{\Delta}_1$ and no other zeros in $\mathbb{C} \setminus \Delta_2$. Additionally,
\begin{equation} \label{resto}
\frac{\mathcal{A}_{{\bf n},0}(z)}{w_{\bf n}(z)} = \int \frac{ \mathcal{A}_{{\bf n},1}(x) d\sigma_1(x)}{w_{\bf n}(x)(z-x)}
\end{equation}
and
\begin{equation} \label{a0} a_{{\bf n},0}(z) = - \int \frac{\sum_{j=1}^{m} (w_{\bf n}(x) a_{{\bf n},j}(z) - w_{\bf n}(z) a_{{\bf n},j}(x)) d{s}_{1,j}(x) }{(z-x) w_{\bf n}(x)}.
\end{equation}
\end{theorem}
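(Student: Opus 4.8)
The plan is to separate the statement into two layers. The analytic identities \eqref{resto}--\eqref{a0} and the orthogonality \eqref{ortog} I would obtain by direct manipulation of i)--ii) and the defining relations $ds_{1,j}=\widehat{s}_{2,j}\,d\sigma_1$ of a Nikishin system; the structural assertions (uniqueness, exact degrees, exact number and location of the zeros of $\mathcal{A}_{{\bf n},1}$) I would then deduce from \eqref{ortog} together with the Chebyshev/perfectness properties of Nikishin systems (\cite{FL4}, \cite{NS}). To this end, fix a nontrivial solution $(a_{{\bf n},0},\ldots,a_{{\bf n},m})$ of i)--ii). With the convention $\widehat{s}_{2,1}\equiv 1$ one has $\mathcal{A}_{{\bf n},1}(x)=\sum_{j=1}^{m}a_{{\bf n},j}(x)\widehat{s}_{2,j}(x)$; set $F(z):=\int \mathcal{A}_{{\bf n},1}(x)\,d\sigma_1(x)/\big(w_{\bf n}(x)(z-x)\big)$, which is holomorphic on $\mathbb{C}\setminus\Delta_1$ (here $w_{\bf n}$ has constant sign and is bounded away from $0$ on $\Delta_1$) and vanishes at $\infty$. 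Multiplying $F$ by $w_{\bf n}(z)$ and using that $\big(w_{\bf n}(z)-w_{\bf n}(x)\big)/(z-x)$ and $\big(a_{{\bf n},j}(z)-a_{{\bf n},j}(x)\big)/(z-x)$ are polynomials, one gets $w_{\bf n}(z)F(z)=\sum_{j=1}^{m}a_{{\bf n},j}(z)\widehat{s}_{1,j}(z)+P(z)$ for an explicit polynomial $P$, that is, $w_{\bf n}(z)F(z)-\mathcal{A}_{{\bf n},0}(z)=P(z)-a_{{\bf n},0}(z)$. Dividing by $w_{\bf n}$, the left side is holomorphic off $\Delta_1$ by ii), so $(P-a_{{\bf n},0})/w_{\bf n}$ has no poles at the zeros of $w_{\bf n}$ and hence is a polynomial; comparing behaviour at $\infty$ (where $F$ and $\mathcal{A}_{{\bf n},0}/w_{\bf n}$ vanish) shows that polynomial is $0$. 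This yields $\mathcal{A}_{{\bf n},0}/w_{\bf n}=F$, which is \eqref{resto}, and $a_{{\bf n},0}=P$, which after collecting the numerator of $P$ in the form $w_{\bf n}(x)a_{{\bf n},j}(z)-w_{\bf n}(z)a_{{\bf n},j}(x)$ is \eqref{a0}.

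Next I would expand \eqref{resto} at infinity, $\mathcal{A}_{{\bf n},0}(z)/w_{\bf n}(z)=\sum_{\nu\geq 0}z^{-\nu-1}\int x^{\nu}\mathcal{A}_{{\bf n},1}(x)\,d\sigma_1(x)/w_{\bf n}(x)$ (the moments exist by hypothesis); the decay $\mathcal{O}(1/z^{|{\bf n}|})$ in ii) kills the coefficients of $z^{-1},\ldots,z^{-(|{\bf n}|-1)}$, which is exactly \eqref{ortog}. Since $1/w_{\bf n}$ has constant sign on $\Delta_1$ and each $\widehat{s}_{2,j}$ is real and of constant sign on $\stackrel{\circ}{\Delta}_1$ (as $\stackrel{\circ}{\Delta}_1\cap\Delta_2=\emptyset$), $\mathcal{A}_{{\bf n},1}$ is real and analytic on $\stackrel{\circ}{\Delta}_1$. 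One first observes $\mathcal{A}_{{\bf n},1}\not\equiv 0$: otherwise \eqref{resto} forces $\mathcal{A}_{{\bf n},0}\equiv 0$, and peeling off the jumps of $\mathcal{A}_{{\bf n},1},\mathcal{A}_{{\bf n},2},\ldots$ across $\Delta_2,\Delta_3,\ldots$ gives successively $a_{{\bf n},m}\equiv\cdots\equiv a_{{\bf n},1}\equiv a_{{\bf n},0}\equiv 0$, contrary to i). Then the standard Chebyshev argument applies: if $\mathcal{A}_{{\bf n},1}$ had at most $|{\bf n}|-2$ sign changes on $\stackrel{\circ}{\Delta}_1$, multiplying $\mathcal{A}_{{\bf n},1}$ by the monic polynomial with those zeros would produce a function of constant sign on $\stackrel{\circ}{\Delta}_1$, so its integral against the constant-sign measure $d\sigma_1/w_{\bf n}$ would be nonzero, contradicting \eqref{ortog}. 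Hence $\mathcal{A}_{{\bf n},1}$ has at least $|{\bf n}|-1$ sign changes in $\stackrel{\circ}{\Delta}_1$.

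To turn this into the exact count I would use the property of Nikishin systems that a nontrivial form $\sum_{j=1}^{m}b_j\widehat{s}_{2,j}$ with $\deg b_j\leq m_j-1$ has at most $\sum_j m_j-1$ zeros in $\mathbb{C}\setminus\Delta_2$ (for $m=1$ this is $\deg b_1\leq m_1-1$; in general it follows by induction on $m$, applying the analysis above to the functional equation $\mathcal{A}_{{\bf n},1}(z)=\wtil{a}_1(z)+\int \mathcal{A}_{{\bf n},2}(t)\,d\sigma_2(t)/(z-t)$, and it is available in \cite{FL4}, \cite{NS}). Taking $m_j=n_j$ bounds the zeros of $\mathcal{A}_{{\bf n},1}$ in $\mathbb{C}\setminus\Delta_2$ by $|{\bf n}|-1$, so combined with the lower bound $\mathcal{A}_{{\bf n},1}$ has precisely $|{\bf n}|-1$ zeros there, all simple and lying in $\stackrel{\circ}{\Delta}_1$. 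For uniqueness I would argue by dimension: the solution set of i)--ii) is a linear space of dimension $\geq 1$ (more parameters than conditions, as recalled before the theorem), and if it had dimension $\geq 2$ the linear functional $\int x^{|{\bf n}|-1}\mathcal{A}_{{\bf n},1}\,d\sigma_1/w_{\bf n}$ on it would vanish at some nonzero solution, whose $\mathcal{A}_{{\bf n},1}$ would then satisfy $|{\bf n}|$ orthogonality relations and hence have $\geq |{\bf n}|$ zeros in $\stackrel{\circ}{\Delta}_1$, impossible. The exact degrees $\deg a_{{\bf n},j}=n_j-1$ for $j\geq 1$ come from the same circle of ideas: a deficiency $\deg a_{{\bf n},j_0}\leq n_{j_0}-2$ would drop the zero bound for $\mathcal{A}_{{\bf n},1}$ to $|{\bf n}|-2$, contradicting the $|{\bf n}|-1$ sign changes; and $\deg a_{{\bf n},0}=n_0-1$ is read off \eqref{a0}, the contributions of order above $z^{n_0-1}$ cancelling by \eqref{ortog} while the coefficient of $z^{n_0-1}$ reduces to a combination of the (now full-degree) leading coefficients of the $a_{{\bf n},j}$ with $n_j=\max_k n_k$, whose non-vanishing is the normality of ${\bf n}$.

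The step I expect to be the main obstacle is the sharp upper bound $|{\bf n}|-1$ on the number of zeros of $\mathcal{A}_{{\bf n},1}$ in $\mathbb{C}\setminus\Delta_2$ --- equivalently the perfectness of Nikishin systems. Everything else is essentially formal: the identities \eqref{resto}--\eqref{a0} are algebra plus a Liouville-type argument, the orthogonality is a power-series expansion, and the lower bound on sign changes is classical; but converting ``at least $|{\bf n}|-1$'' into ``exactly $|{\bf n}|-1$'', which is what powers uniqueness, the exact degrees, and the absence of extraneous zeros, rests on this structural fact and on careful degree bookkeeping (especially for $a_{{\bf n},0}$) through the induction on the number of measures.
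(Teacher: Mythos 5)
Most of your proposal tracks the paper's own proof closely: the identities \eqref{resto} and \eqref{a0} via the polynomial $P$ and a Liouville-type argument, the orthogonality \eqref{ortog} by expansion at infinity, the lower bound on sign changes by the Chebyshev argument, the upper bound $|{\bf n}|-1$ on zeros of $\mathcal{A}_{{\bf n},1}$ in $\mathbb{C}\setminus\Delta_2$ from \cite[Theorem 1.1]{FL4}, and the exact degrees of $a_{{\bf n},j}$ for $j\geq 1$ by the degree-deficiency contradiction are all exactly the route taken in Section 2 (Lemma \ref{reduc} plus the first half of the proof of Theorem \ref{unicidad}). Your uniqueness argument (impose one extra orthogonality relation on a two-dimensional solution space to force $\geq|{\bf n}|$ zeros) is a mild variant of the paper's (take a linear combination that drops some $\deg a_{{\bf n},j}$ below $n_j-1$); both are fine once the exact degrees and the sharp zero bound are in hand.

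The genuine gap is the claim that $\deg a_{{\bf n},0}=n_0-1$ can be ``read off \eqref{a0}.'' Extracting the coefficient of $z^{n_0-1}$ from \eqref{a0} (say with $w_{\bf n}\equiv 1$) gives $-\sum_{j}\lambda_j|s_{1,j}|$, the sum running over all $j$ with $n_j=\max_k n_k$, where $\lambda_j$ is the leading coefficient of $a_{{\bf n},j}$. When the maximum is attained at a single index this is a single nonzero term and your argument closes; but when several indices attain the maximum, the weights $|s_{1,j}|$ carry the (possibly different) signs of the measures $s_{1,j}$ and the $\lambda_j$ are not under your control, so there is no a priori reason this combination cannot vanish. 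Appealing to ``the normality of ${\bf n}$'' at this point is circular: the non-vanishing of that combination \emph{is} the statement $\deg a_{{\bf n},0}=n_0-1$ that you are trying to prove. This is precisely where the paper has to work hardest: it divides $\mathcal{A}_{{\bf n},0}$ by $\widehat{s}_{1,j}$ (with $j$ the first index attaining the maximum), uses \eqref{s22}, \eqref{4.4} and \cite[Lemma 2.12]{FL4} (formula (31) of \cite{FL3}) to rewrite the result as a linear form in a permuted Nikishin system in which $a_{{\bf n},0}$ occupies one of the coefficient slots $a^*_{{\bf n},k}$ with $k\geq 1$, and then reruns the sign-change versus zero-count argument on that new form. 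Without some such reduction (or another genuine argument for the multi-maximum case), your proof of the exact degree of $a_{{\bf n},0}$ is incomplete.
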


Notice that nothing has been said about the location of the zeros of the polynomials $a_{{\bf n},j}$. For special sequences of multi-indices this information can be deduced from the convergence of type I Hermite-Pad\'e approximants. We have the following result (see also Lemma \ref{haus}).

\begin{theorem} \label{converge}  Let $S= (s_{1,1},\ldots,s_{1,m}) = \mathcal{N}(\sigma_1,\ldots,\sigma_m)$, $\Lambda \subset \mathbb{Z}_+^{m}$ an infinite sequence of distinct muti-indices, and  $(w_{\bf n})_{{\bf n}\in \Lambda}, \deg w_{\bf n} \leq |{\bf n}| + \max(n_j)-2,$ a sequence of  polynomials with real coefficients whose zeros lie in $\mathbb{C} \setminus\Delta_1$, be given. Consider the corresponding sequence $\left(a_{{\bf n},0},\ldots, a_{{\bf n},m}\right), {\bf n} \in \Lambda,$ of  type I multi-point Hermite-Pad\'e approximants of $S$ with respect to $(w_{\bf n})_{{\bf n} \in \Lambda}$. Assume that
\begin{equation}\label{cond1} \sup_{{\bf n}\in \Lambda}\left(\max_{j=1,\ldots,m}(n_j) - \min_{k=1,\ldots,m}(n_k) \right)\leq C < \infty, \end{equation}
and that either $\Delta_{m-1}$ is bounded away from $\Delta_m$  or $\sigma_m$ satisfies Carleman's condition.
Then,
\begin{equation} \label{fund1} \lim_{{\bf n} \in \Lambda}  \frac{a_{{\bf n}, j}}{a_{{\bf n},m}} =  (-1)^{m-j}\widehat{s}_{m,j+1},\qquad j=0,\ldots,m-1,
\end{equation}
uniformly on each compact subset $\mathcal{K} \subset \mathbb{C} \setminus \Delta_m$. The accumulation points of   sequences of zeros of the polynomials $a_{{\bf n},j}, j=0,\ldots,m, {\bf n} \in \Lambda$ are contained in $\Delta_m \cup \{\infty\}$. Additionally,
\begin{equation} \label{fund2} \lim_{{\bf n} \in \Lambda}  \frac{\mathcal{A}_{{\bf n},j}}{a_{{\bf n},m}} = 0, \qquad j=0,\ldots,m-1,
\end{equation}
uniformly on each compact subset $\mathcal{K} \subset \mathbb{C} \setminus (\Delta_{j+1} \cup \Delta_m)$.
\end{theorem}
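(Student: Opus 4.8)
The plan is to extract from Theorem~\ref{unicidad} a complete count of the zeros of the linear forms $\mathcal{A}_{{\bf n},j}$, feed it into a normal-families argument, and identify the limits with the Cauchy transforms of the \emph{reversed} Nikishin system $\mathcal{N}(\sigma_m,\dots,\sigma_1)$, which by \cite{FL4} is again a Nikishin system with generating intervals $\Delta_m,\Delta_{m-1},\dots,\Delta_1$; this reduction places us in the setting of the known type II Markov--Stieltjes theorems, whose hypothesis bears on the \emph{first} generating measure, here $\sigma_m$. Note first that \eqref{fund1} and \eqref{fund2} are essentially equivalent: dividing the identity $\mathcal{A}_{{\bf n},j}=a_{{\bf n},j}+\sum_{k=j+1}^{m}a_{{\bf n},k}\widehat{s}_{j+1,k}$ by $a_{{\bf n},m}$ shows that \eqref{fund2} for a fixed $j$ amounts to $\lim a_{{\bf n},j}/a_{{\bf n},m}=-\sum_{k=j+1}^{m}\bigl(\lim a_{{\bf n},k}/a_{{\bf n},m}\bigr)\widehat{s}_{j+1,k}$, and the unique solution of this triangular system normalized by $a_{{\bf n},m}/a_{{\bf n},m}\equiv 1$ is, by the ``Nikishin algebra'' of Cauchy transforms (provable by induction on $m-j$ via repeated partial fractions, in the spirit of the reversal lemma of \cite{FL4}), exactly $(-1)^{m-j}\widehat{s}_{m,j+1}$. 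The statement on the accumulation of zeros will then follow from \eqref{fund1} by Hurwitz's theorem, since $(-1)^{m-j}\widehat{s}_{m,j+1}$, being the Cauchy transform of a constant-sign measure supported on $\Delta_m$, has neither zeros nor poles in $\mathbb{C}\setminus\Delta_m$.

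Normalize $a_{{\bf n},m}$ to be monic; this is licit because $\deg a_{{\bf n},m}=n_m-1$ by Theorem~\ref{unicidad} while \eqref{cond1} forces every coordinate $n_j$ to tend to $\infty$ along $\Lambda$. Writing $a_{{\bf n},k}(z)=a_{{\bf n},k}(x)+\bigl(a_{{\bf n},k}(z)-a_{{\bf n},k}(x)\bigr)$ inside the integrals that define the Nikishin products yields, for $j=1,\dots,m-1$, the recursion
\begin{equation*}
\mathcal{A}_{{\bf n},j}(z)=q_{{\bf n},j}(z)+\int\frac{\mathcal{A}_{{\bf n},j+1}(x)}{z-x}\,d\sigma_{j+1}(x),\qquad \deg q_{{\bf n},j}\le n_0:=\max_k(n_k)-1.
\end{equation*}
Since $\sigma_{j+1}$ has constant sign, the measure $\mathcal{A}_{{\bf n},j+1}\,d\sigma_{j+1}$ changes sign on $\Delta_{j+1}$ precisely at the sign changes of $\mathcal{A}_{{\bf n},j+1}$. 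Invoking the classical bound that a function $q+\widehat{\mu}$ with $\deg q\le d$ and $\mu$ a measure on an interval having at most $k$ sign changes has at most $d+k+1$ zeros off that interval, and starting from the $|{\bf n}|-1$ zeros of $\mathcal{A}_{{\bf n},1}$ in $\stackrel{\circ}{\Delta}_1$ supplied by Theorem~\ref{unicidad}, an induction gives that $\mathcal{A}_{{\bf n},j}$ has at least $|{\bf n}|-1-(j-1)(n_0+1)$ sign changes in $\stackrel{\circ}{\Delta}_j$. For $j=m$, using \eqref{cond1} (so that $n_k\ge n_0+1-C$ for all $k$, hence $|{\bf n}|\ge m(n_0+1)-mC$), this lower bound is $n_m-1-O(C)$, whence the monic polynomial $a_{{\bf n},m}=\mathcal{A}_{{\bf n},m}$, of degree $n_m-1$, has all but at most a bounded number of its zeros (the bound depending only on $m$ and $C$) in $\stackrel{\circ}{\Delta}_m$. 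Standard size estimates for polynomials with a prescribed asymptotic zero location then make the families $\{a_{{\bf n},j}/a_{{\bf n},m}\}_{{\bf n}\in\Lambda}$ and $\{\mathcal{A}_{{\bf n},j}/a_{{\bf n},m}\}_{{\bf n}\in\Lambda}$ normal --- as families of meromorphic functions with a uniformly bounded number of poles --- on $\mathbb{C}\setminus\Delta_m$, respectively on $\mathbb{C}\setminus(\Delta_{j+1}\cup\Delta_m)$; pass to a subsequence $\Lambda'\subset\Lambda$ along which all of them converge.

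The heart of the matter is the identification of the limits along $\Lambda'$, which proceeds as in the type II Markov--Stieltjes theory \cite{BL,FL1}. The excess of zeros in $\stackrel{\circ}{\Delta}_j$ produced above, together with the decay $\mathcal{A}_{{\bf n},0}/w_{\bf n}=\mathcal{O}(z^{-|{\bf n}|})$ from condition ii) of Definition~\ref{type1} and the representation \eqref{resto}, forces the limit of $\mathcal{A}_{{\bf n},j}/a_{{\bf n},m}$ to vanish identically for $j=0,\dots,m-1$ (the requisite normality and non-degeneracy being obtained by standard estimates for polynomials with controlled zeros); equivalently, the projective vectors $[a_{{\bf n},0}:\dots:a_{{\bf n},m}]$ converge, their limit encoding the common-denominator/numerator data of the type II Hermite--Pad\'e approximants of $(\widehat{s}_{m,m},\widehat{s}_{m,m-1},\dots,\widehat{s}_{m,1})$, i.e.\ of the reversed Nikishin system. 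If $\Delta_m$ is bounded this identification is immediate; if $\Delta_m$ is a half line one needs the hypothesis because $\sigma_m$ is the first generating measure of the reversed system: either $\Delta_{m-1}$ (hence every $\Delta_{j+1}$, $j+1\le m-1$) lies at positive distance from $\Delta_m$, so the Cauchy transforms on $\Delta_m$ extend across those intervals, or $\sigma_m$ satisfies Carleman's condition, so the associated moment problem is determinate and the limiting Cauchy transform with respect to $\sigma_m$ is unique --- exactly the dichotomy in the statement. Once the limits are identified as $(-1)^{m-j}\widehat{s}_{m,j+1}$ (and $0$ for the forms), the limit does not depend on $\Lambda'$, so the full sequence converges; since $(-1)^{m-j}\widehat{s}_{m,j+1}$ is holomorphic on all of $\mathbb{C}\setminus\Delta_m$, the boundedly many spurious poles disappear, which upgrades \eqref{fund1} to $\mathbb{C}\setminus\Delta_m$, yields \eqref{fund2}, and gives the assertion on zeros via Hurwitz.

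I expect the identification step to be the main obstacle, specifically the case where $\Delta_m$ is unbounded and abuts $\Delta_{m-1}$: there one cannot rely on compactness of $\overline{\mathbb{C}}\setminus\Delta_m$ and must instead carry out a determinacy argument for the moment problem of $\sigma_m$, in the style of Stieltjes' theorem for $m=1$. A lesser but genuine nuisance is the bookkeeping imposed by the weights $w_{\bf n}$ and by the slack $C$ in \eqref{cond1}, which prevents a clean ``all zeros of $a_{{\bf n},m}$ lie in $\stackrel{\circ}{\Delta}_m$'' statement and forces the normal-families argument to be run in its ``bounded number of poles'' form.
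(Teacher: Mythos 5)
Your skeleton matches the paper's in outline (localize the zeros of the forms $\mathcal{A}_{{\bf n},j}$ and of $a_{{\bf n},m}$, recognize the limits as Cauchy transforms of the reversed system $\mathcal{N}(\sigma_m,\ldots,\sigma_1)$, close with Rouch\'e/Hurwitz), but the two load-bearing steps are asserted rather than proved. First, normality: you claim that ``standard size estimates for polynomials with a prescribed asymptotic zero location'' make $\{a_{{\bf n},j}/a_{{\bf n},m}\}$ a normal family of meromorphic functions with boundedly many poles. Knowing where the zeros of the denominator lie gives no upper bound on the ratio, because nothing yet controls the size of $a_{{\bf n},j}$ relative to $a_{{\bf n},m}$; a family of rational functions with boundedly many poles is not normal for free (Montel requires omitted values or spherical-derivative bounds). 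The paper never proves normality. It first establishes convergence \emph{in Hausdorff content} (Lemma \ref{haus}) by exhibiting, after $m-j-1$ explicit reductions using Lemma \ref{reduc} together with the identities \eqref{s22} and \eqref{4.4} for reciprocals and ratios of Cauchy transforms, the quotient $a_{{\bf n},j}/a_{{\bf n},m}$ as an \emph{incomplete diagonal multi-point Pad\'e approximant} of $(-1)^{m-j}\widehat{s}_{m,j+1}$ (with interpolation nodes at the accumulated sign changes), and then applies the Stieltjes-type Lemma \ref{BusLop} of \cite{BL}. Only afterwards does Gonchar's lemma upgrade $h$-convergence to uniform convergence, and for that the paper needs the finer fact — obtained via the permutation theorem \cite[Theorem 3.2]{FL4}, not via your sign-change count, which only controls all but $O(C)$ zeros — that for the last minimal component $\overline{\jmath}$ \emph{all} zeros of $a_{{\bf n},\overline{\jmath}}$ lie in $\stackrel{\circ}{\Delta}_m$, so that $a_{{\bf n},m}/a_{{\bf n},\overline{\jmath}}$ is holomorphic off $\Delta_m$ and Rouch\'e can expel the spurious poles.

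Second, the identification of the limit, which you yourself flag as ``the main obstacle,'' is exactly the content you would need to supply and do not: saying the limit is ``forced'' by the excess of zeros and the decay of $\mathcal{A}_{{\bf n},0}/w_{\bf n}$, or deferring to ``the type II Markov--Stieltjes theory,'' begs the question, since the whole point of the theorem is that no type II statement applies directly. Moreover, your determinacy argument invokes Carleman's condition only for $\sigma_m$, but the measures actually being approximated in the reduction are $s_{m,j+1}=\langle\sigma_m,\ldots,\sigma_{j+1}\rangle$ and the inverse measures $\tau_{m,j+1}$ from \eqref{s22}; that Carleman's condition for $\sigma_m$ propagates to these is precisely Theorem \ref{momentos} and Corollary \ref{momentosNik}, a separate and non-trivial ingredient (a Hessenberg-determinant estimate on the moments of the inverse measure) that your proposal omits entirely. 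Your concluding triangular-system argument deducing \eqref{fund1} from \eqref{fund2} does match the paper's use of formula (17) of \cite[Lemma 2.9]{FL4}, but it is deployed there only in the direction \eqref{fund1}$\Rightarrow$\eqref{fund2}, after the hard work is done.
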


We wish to underline that  Theorem \ref{converge} requires no special analytic property  from the generating measures of the Nikishin system except for Carleman's condition on  $\sigma_m$.

Notice that the sequences of rational functions $\left( {a_{{\bf n}, j}}/{a_{{\bf n},m}}\right), {\bf n} \in \Lambda, j=0,\ldots,m-1,$ allow to recover the Cauchy transforms of the measures in the Nikishin system $\mathcal{N}(\sigma_m,\ldots,\sigma_1)$ in contrast with the sequences $\left( {P_{{\bf n}, j}}/{Q_{{\bf n}}}\right), {\bf n} \in \Lambda, j=1,\ldots,m,$ of type II multi-point Hermite-Pad\'e approximants which recover the Cauchy transforms of the measures in  $\mathcal{N}(\sigma_1,\ldots,\sigma_m)$.

In the process of writing this paper, S.P. Suetin sent us \cite{RS1} and \cite{RS2}. The first one of these papers announces the results contained in the second one. Those papers deal with the study of type I Hermite-Pad\'e approximants for an interesting class of systems  of two functions $(m=2)$ which form a generalized Nikishin system  in the sense that the second generating measure lives on a symmetric (with respect to the real line) compact set which does not separate the complex plane and is made up of finitely many analytic arcs. The authors obtain the logarithmic asymptotic of the sequences of Hermite-Pad\'e polynomials   $a_{{\bf n},j}, j=1,2,$ and an analogue of \eqref{fund1} for $j=1$. Convergence is proved in capacity (see \cite[Theorem 1]{RS1} and \cite[Corollary 1]{RS2}.

For the proof of Theorem \ref{converge} we need a convenient representation of the reciprocal  of the Cauchy transform of a measure. It is  known that for each $\sigma
\in {\mathcal{M}}(\Delta),$ where $\Delta$ is contained in a half line, there exists a measure $\tau \in
{\mathcal{M}}(\Delta)$ and ${\ell}(z)=a z+b, a = 1/|\sigma|, b \in {\mathbb{R}},$ such that
\begin{equation} \label{s22}
{1}/{\widehat{\sigma}(z)}={\ell}(z)+ \widehat{\tau}(z),
\end{equation}
where $|\sigma|$ is the total variation of $\sigma.$  See  \cite[Appendix]{KN} and \cite[Theorem 6.3.5]{stto} for measures with compact support, and \cite[Lemma 2.3]{FL4} when the support is contained in a half line.

We call $\tau$  the inverse measure of $\sigma.$ Such measures appear frequently in our reasonings, so we will fix a
notation to distinguish them. In relation with measures  denoted with $s$ they will carry over to them the
corresponding sub-indices. The same goes for the  polynomials
$\ell$. For example,
\[
{1}/{\widehat{s}_{j,k}(z)}  ={\ell}_{j,k}(z)+
\widehat{\tau}_{j,k}(z).
\]
We also write
\[
{1}/{\widehat{\sigma}_{\alpha}(z)} ={\ell}_{\alpha }(z)+
\widehat{\tau}_{\alpha }(z).
\]

The following result has independent interest and will be used in combination with Lemma \ref{BusLop} below in the  proof of Theorem \ref{converge}.

\begin{theorem} \label{momentos} Let $(s_{1,1},s_{1,2}) = \mathcal{N}(\sigma_1,\sigma_2)$.   If $\sigma_1$ satisfies Carleman's condition so do $s_{1,2}$ and $\tau_1$.
\end{theorem}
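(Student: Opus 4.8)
The plan is to exploit the relations \eqref{s22} connecting a measure, its Cauchy transform, and its inverse measure, together with the two-step structure $s_{1,2} = \langle \sigma_1,\sigma_2\rangle$, and to turn everything into a statement about moments so that Carleman's condition can be propagated. Recall that Carleman's condition on a measure supported in a half line is equivalent (up to an affine change of variables) to $\sum_n |c_n|^{-1/2n} = \infty$ for its moment sequence, and that by \eqref{Carle} it suffices to control the growth of the moments. So the core of the argument will be: estimate the moments of $s_{1,2}$ and of $\tau_1$ in terms of the moments of $\sigma_1$ (and the fixed measure $\sigma_2$, whose support is compact or at least whose relevant quantities are harmless), and conclude that the Carleman series for $s_{1,2}$ and $\tau_1$ diverges whenever the one for $\sigma_1$ does.

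First I would treat $\tau_1$. From $1/\widehat{\sigma}_1(z) = \ell_1(z) + \widehat{\tau}_1(z)$ with $\ell_1(z) = z/|\sigma_1| + b$, one can expand both sides at $z=\infty$: $\widehat{\sigma}_1(z) = \sum_{k\ge 0} c_k^{(1)} z^{-k-1}$ where $c_k^{(1)} = \int x^k\, d\sigma_1$, and $\widehat{\tau}_1(z) = \sum_{k\ge 0} d_k z^{-k-1}$ where $d_k = \int x^k\, d\tau_1$. Matching the asymptotic expansion of $1/\widehat{\sigma}_1(z)$ against $\ell_1(z) + \widehat{\tau}_1(z)$ gives a recursion (a Newton-type identity) expressing each $d_k$ as a polynomial in $c_0^{(1)},\ldots,c_{k+1}^{(1)}$; in fact $d_k$ is, up to the normalization constants, a sum over compositions of products of the $c_j^{(1)}$. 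The combinatorics of this recursion show that $|d_k|$ is bounded by a quantity of the form $C^k \max_{j\le k+1} |c_j^{(1)}|^{\text{(something linear in }k)}$ — concretely, one gets an inequality like $|d_k| \le M^{k+1}\, \overline{c}_{k+1}$ where $\overline{c}_{k+1} := \max_{0\le j\le k+1}|c_j^{(1)}|^{(k+1)/(j+1)}$ or a similar homogenized maximum. Then $|d_k|^{-1/2k} \ge M^{-1}\, \overline{c}_{k+1}^{-1/2k}$, and since $\sum_k (c_k^{(1)})^{-1/2k} = \infty$ is equivalent to $\sum_k \overline{c}_k^{-1/2k} = \infty$ (a standard manipulation: Carleman's condition is stable under such monotone rearrangements of the moment sequence), the Carleman series for $\tau_1$ diverges. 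The same expansion argument, run in the other direction, also shows the converse would hold, but we only need one direction.

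Next, $s_{1,2}$. Here $d s_{1,2}(x) = \widehat{\sigma}_2(x)\, d\sigma_1(x)$, so its moments are $\int x^k \widehat{\sigma}_2(x)\, d\sigma_1(x)$. Since $\sigma_2$ is a fixed finite measure with $\Delta_2$ to the right of $\Delta_1$ (or meeting it in a single point which is not a mass point), the function $\widehat{\sigma}_2$ is bounded on $\Delta_1$ away from the contact point, and near the contact point it is integrable against $\sigma_1$ by the standing assumption \eqref{eq:autom} (which is precisely what makes $s_{1,2}\in\mathcal{M}(\Delta_1)$); moreover it has constant sign there. Hence there are constants $0 < A \le \|\widehat{\sigma}_2\|$ (in an appropriate weighted sense) such that the $k$th moment of $|s_{1,2}|$ is comparable to, or at least bounded above by a constant times, $\int |x|^k\, d|\sigma_1|(x)$ after a harmless affine normalization — more carefully, one splits $\Delta_1$ into the part where $\widehat{\sigma}_2$ is bounded and a small neighborhood of the contact point, and on the latter the contribution to the $k$th moment is $O(|x_0|^k)$ times a fixed finite constant, which is absorbed. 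Thus the moments of $s_{1,2}$ are dominated by $\text{const}\cdot c_k^{(1)}$ (up to affine change), so $\sum_k |\text{moment}_k(s_{1,2})|^{-1/2k} \ge \text{const} \cdot \sum_k |c_k^{(1)}|^{-1/2k} = \infty$, again using stability of Carleman's condition under constant multiplicative perturbations of the moments and affine transformations.

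The main obstacle I expect is the moment-growth estimate for $\tau_1$: getting the recursion from the reciprocal relation \eqref{s22} under control, i.e. proving a clean bound of the form $|d_k| \le M^{k+1}\max_{j\le k+1}|c_j^{(1)}|^{(k+1)/(j+1)}$ and then checking that this bound is good enough to preserve divergence of the Carleman series. The subtlety is that the moments of $\sigma_1$ may grow arbitrarily fast (Carleman only bounds them from above in a weak sense), so the estimate on $d_k$ must be genuinely homogeneous of the right degree in the $c_j^{(1)}$ — a crude bound like $|d_k| \le (\text{product of all } c_j^{(1)}, j\le k+1)$ would be far too lossy. A secondary technical point is handling the contact point $x_{1,2}$ when $\Delta_1 \cap \Delta_2 \ne \emptyset$, where $\widehat{\sigma}_2$ blows up; but the hypothesis that $x_{1,2}$ is not a mass point of $\sigma_1$, combined with \eqref{eq:autom}, is exactly designed to make the weighted integrals converge, so this should be routine once phrased correctly.
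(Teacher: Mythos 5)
Your overall route is the same one the paper takes: for $s_{1,2}$ you bound the moments $\int x^n\widehat{\sigma}_2\,d\sigma_1$ by a fixed constant plus a constant times $c_n$ and split the Carleman sum accordingly (this half of your sketch is essentially complete, except that "dominated by $\mathrm{const}\cdot c_n$" should really be "$\le |s_{1,2}| + C c_n$" with the index set then split according to whether $Cc_n$ is smaller or larger than $|s_{1,2}|$, since $c_n$ need not dominate the additive constant); and for $\tau_1$ you extract the moments $d_n$ from the coefficient identities coming from $1/\widehat{\sigma}_1=\ell_1+\widehat{\tau}_1$ and try to estimate them in terms of the $c_n$. The problem is that the crux of the $\tau_1$ half is precisely the step you defer as "the main obstacle I expect": the conjectured bound $|d_k|\le M^{k+1}\max_{j\le k+1}|c_j|^{(k+1)/(j+1)}$ is not proved, and the further claim that divergence of $\sum c_k^{-1/2k}$ is "stable under such monotone rearrangements" (so that the homogenized maxima still give a divergent series) is also only asserted. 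As written, the proof of the statement about $\tau_1$ is therefore not there.

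The paper closes exactly this gap by a determinantal argument. The matching of coefficients gives the triangular system $1=d_{-2}c_0$, $0=d_{-2}c_{k+2}+d_{-1}c_{k+1}+\cdots+d_kc_0$, and Cramer's rule yields $d_n=(-1)^n\Omega_n/c_0^{n+3}$, where $\Omega_n$ is a lower Hessenberg determinant of order $n+2$ in the moments. Two combinatorial facts are then proved: the expansion of $\Omega_n$ has exactly $2^{n+1}$ nonzero terms, and (by induction, expanding along the first row) each term is a product of $n+2$ moments whose subindices sum to $n+2$. For any such term, H\"older's inequality applied to each factor gives $c_0^{n+2-j}c_{\alpha_1}\cdots c_{\alpha_j}\le c_0^{n+1}c_{n+2}$ because $\alpha_1+\cdots+\alpha_j=n+2$; hence $|d_n|\le 2^{n+1}c_{n+2}/c_0^{2}$. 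From this clean bound Carleman's condition for $\tau_1$ follows at once (the geometric factor $2^{n+1}$ and the shift of the index by $2$ do not affect divergence of the series), with no need for the rearrangement-stability lemma you invoke. In other words, the "genuinely homogeneous of the right degree" estimate you correctly identify as necessary is exactly the statement that every term of $\Omega_n$ has total subindex weight $n+2$, and H\"older converts each such term into the single dominant term $c_0^{n+1}c_{n+2}$; supplying this argument is what your proposal is missing.
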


This paper is organized as follows. In Section \ref{aux} we prove Theorems \ref{unicidad} and \ref{momentos}. We also present some notions and results necessary for the proof of Theorem \ref{converge}. Section \ref{proofmain} contains the proof of Theorem \ref{converge} and  some extensions of the main result to sequences of multi-indices satisfying conditions weaker than \eqref{cond1}, estimates of the rate of convergence in \eqref{fund1}-\eqref{fund2} for the case when $\Delta_m$ or $\Delta_{m-1}$ is bounded and $\Delta_m \cap \Delta_{m-1} = \emptyset$, and  applications  to other simultaneous approximation schemes.

\section{Proof of Theorem \ref{ortog} and auxiliary results} \label{aux}

We begin with a lemma which allows to give an integral representation of the remainder of type I multi-point  Hermite-Pad\'e approximants.

\begin{lemma} \label{reduc} Let $(s_{1,1},\ldots,s_{1,m}) = \mathcal{N}(\sigma_1,\ldots,\sigma_m)$ be given. Assume that there exist polynomials with real coefficients $a_0,\ldots,a_m$ and a polynomial $w$ with real coefficients whose zeros lie in $\mathbb{C} \setminus \Delta_1$  such that
\[\frac{\mathcal{A}(z)}{w(z)} \in \mathcal{H}(\mathbb{C} \setminus \Delta_1)\qquad \mbox{and} \qquad \frac{\mathcal{A}(z)}{w(z)} = \mathcal{O}\left(\frac{1}{z^N}\right), \quad z \to \infty,
\]
where $\mathcal{A}  := a_0 + \sum_{k=1}^m a_k  \widehat{s}_{1,k} $ and $N \geq 1$. Let $\mathcal{A}_1  := a_1 + \sum_{k=2}^m a_k  \widehat{s}_{2,k} $. Then
\begin{equation} \label{eq:3}
\frac{\mathcal{A}(z)}{w(z)} = \int \frac{\mathcal{A}_1(x)}{(z-x)} \frac{d\sigma_1(x)}{w(x)}.
\end{equation}
If $N \geq 2$, we also have
\begin{equation} \label{eq:4}
\int x^{\nu}  \mathcal{A}_1(x)  \frac{d\sigma_1(x)}{w(x)}, \qquad \nu = 0,\ldots, N -2.
\end{equation}
In particular, $\mathcal{A}_1$ has at least $N -1$ sign changes in  $\stackrel{\circ}{\Delta}_1 $.
\end{lemma}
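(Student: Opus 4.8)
The plan is to unfold the definition of $\mathcal{A}$ one Cauchy transform at a time, peeling off the outermost measure $\sigma_1$ from the Nikishin structure. Recall that by Definition \ref{Nikishin} we have $s_{1,k} = \langle \sigma_1, \sigma_2, \ldots, \sigma_k\rangle$, so that $d s_{1,k}(x) = \widehat{s}_{2,k}(x)\, d\sigma_1(x)$ for $k \geq 2$ and $d s_{1,1} = d\sigma_1$. Using this, I would first rewrite
\[
\mathcal{A}(z) = a_0(z) + \sum_{k=1}^m a_k(z) \int \frac{d s_{1,k}(x)}{z - x}
= a_0(z) + \int \frac{a_1(x) + \sum_{k=2}^m a_k(x)\widehat{s}_{2,k}(x)}{z-x}\, d\sigma_1(x) + \int \frac{\sum_{k=1}^m (a_k(z) - a_k(x))\, d s_{1,k}(x)}{z-x},
\]
where in the middle integral I have replaced $a_k(z)$ by $a_k(x)$ inside the integrand, and the last integral collects the correction terms $(a_k(z)-a_k(x))/(z-x)$, each of which is a polynomial in $z$. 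Hence $\mathcal{A}(z)$ equals a polynomial in $z$ plus $\int \mathcal{A}_1(x)(z-x)^{-1}\, d\sigma_1(x)$; call that polynomial $p(z)$, so $\mathcal{A}(z) = p(z) + \int \mathcal{A}_1(x)(z-x)^{-1} d\sigma_1(x)$. Dividing by $w(z)$, and noting that the integral term is $\mathcal{O}(1/z)$ as $z \to \infty$ while $\mathcal{A}(z)/w(z) = \mathcal{O}(1/z^N)$ with $N \geq 1$, forces $p(z)/w(z)$ to be holomorphic on $\overline{\cee}\setminus\Delta_1$ and to vanish at infinity; since $w$ has all its zeros off $\Delta_1$ and $p$ is a polynomial, $p/w$ is in fact entire, hence (vanishing at $\infty$) identically zero, so $p \equiv 0$. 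This gives \eqref{eq:3}.

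For \eqref{eq:4}, assume $N \geq 2$ and expand the geometric series: for $z \to \infty$,
\[
\frac{\mathcal{A}(z)}{w(z)} = \int \frac{\mathcal{A}_1(x)}{w(x)} \cdot \frac{1}{z-x}\, d\sigma_1(x) = \sum_{\nu \geq 0} \frac{1}{z^{\nu+1}} \int x^\nu \frac{\mathcal{A}_1(x)}{w(x)}\, d\sigma_1(x).
\]
Comparing with the hypothesis $\mathcal{A}(z)/w(z) = \mathcal{O}(1/z^N)$, the coefficients of $z^{-1}, \ldots, z^{-(N-1)}$ must all vanish, which is exactly $\int x^\nu \mathcal{A}_1(x)\, d\sigma_1(x)/w(x) = 0$ for $\nu = 0, \ldots, N-2$. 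Finally, the sign-change statement follows from a standard orthogonality argument: $d\sigma_1/w$ has constant sign on $\Delta_1$ (since $\sigma_1 \in \mathcal{M}(\Delta_1)$ has constant sign and $w$, being real with no zeros on $\Delta_1$, has constant sign there), so if $\mathcal{A}_1$ had at most $N-2$ sign changes on $\stackrel{\circ}{\Delta}_1$, one could pick a real polynomial $q$ of degree $\leq N-2$ changing sign exactly at those points, making $q\,\mathcal{A}_1$ of constant sign and not identically zero on $\supp(\sigma_1)$, so $\int q(x)\mathcal{A}_1(x)\,d\sigma_1(x)/w(x) \neq 0$, contradicting \eqref{eq:4}; one should also check $\mathcal{A}_1 \not\equiv 0$, which holds because $\mathcal{A}_1 \equiv 0$ together with \eqref{eq:3} would give $\mathcal{A} \equiv 0$, and one would need a separate remark that the hypotheses exclude the trivial case (or simply note the statement is vacuous then).

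The main technical point to get right is the interchange of summation and integration needed both to split off the polynomial part in the first step and to expand the Cauchy kernel in the second; this requires the finiteness of all moments of $\sigma_1$ (guaranteed by the standing assumption that generating measures of the Nikishin system have all moments) together with $\widehat{s}_{2,k} \in L_1(\sigma_1)$, i.e. the membership conditions \eqref{eq:autom} built into Definition \ref{Nikishin}, which ensure $\mathcal{A}_1 \in L_1(d\sigma_1/w)$ and legitimize the geometric expansion uniformly for $|z|$ large enough. The rest is bookkeeping with the Nikishin product structure $d s_{1,k} = \widehat{s}_{2,k}\, d\sigma_1$.
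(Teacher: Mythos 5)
Your derivation of \eqref{eq:3} has a genuine gap, and it comes from leaving the weight $w$ out of the decomposition. You split $\mathcal{A}(z)=p(z)+\int \mathcal{A}_1(x)(z-x)^{-1}\,d\sigma_1(x)$, but the identity to be proved is $\mathcal{A}(z)=w(z)\int \mathcal{A}_1(x)\,d\sigma_1(x)/\bigl((z-x)w(x)\bigr)$, which for nonconstant $w$ is a different function from $\int \mathcal{A}_1(x)\,d\sigma_1(x)/(z-x)$; so even if your argument did give $p\equiv 0$, it would not yield \eqref{eq:3}. Moreover, the step ``forces $p/w$ to be holomorphic on $\overline{\cee}\setminus\Delta_1$'' is unjustified: the hypothesis only makes the sum $\mathcal{A}/w$ holomorphic at the zeros of $w$, not each summand separately. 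Writing $\widehat{\mu}(z)=\int\mathcal{A}_1(x)(z-x)^{-1}d\sigma_1(x)$, at a zero $z_0$ of $w$ one has $p(z_0)=\mathcal{A}(z_0)-\widehat{\mu}(z_0)=-\widehat{\mu}(z_0)$, which need not vanish; the asymptotic condition only gives $\deg p<\deg w$, and that alone does not force $p\equiv 0$. The paper's proof avoids both problems by adding and subtracting $w(z)\int \mathcal{A}_1(x)\,d\sigma_1(x)/\bigl((z-x)w(x)\bigr)$ and using that $\bigl(w(x)a_k(z)-w(z)a_k(x)\bigr)/(z-x)$ is a polynomial in $z$: the residual polynomial $P$ then satisfies $P=w\cdot\mathcal{O}(1/z)$ at infinity, so $\deg P<\deg w$, and in addition $P$ vanishes at every zero of $w$ (with multiplicity), since both $\mathcal{A}$ (by the holomorphy hypothesis) and the subtracted term (by its explicit factor $w(z)$, the integral being holomorphic off $\Delta_1$) vanish there; hence $P\equiv 0$. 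Repairing your argument essentially amounts to redoing the split with this weighted kernel.

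A secondary point: to get \eqref{eq:4} you expand $1/(z-x)$ into the full geometric series and interchange sum and integral ``for $|z|$ large''; when $\Delta_1$ is unbounded the series does not converge on the support for any fixed $z$, and the resulting moment series need not converge either, so this interchange is not legitimate as stated. Use instead the finite expansion $1/(z-x)=\sum_{\nu=0}^{N-2}x^{\nu}/z^{\nu+1}+x^{N-1}/\bigl(z^{N-1}(z-x)\bigr)$ with remainder, as the paper does; comparison with the $\mathcal{O}(1/z^{N})$ bound then annihilates the first $N-1$ moments. Your final sign-change argument is correct and coincides with the paper's.
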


\begin{proof} We have
\[ \mathcal{A}(z) = a_0(z) + \sum_{k=1}^m a_{k}(z)\widehat{s}_{1,k}(z) \mp w(z) \int \frac{\mathcal{A}_1(x)}{(z-x)} \frac{d\sigma_1(x)}{w(x)} =
\]
\[ a_0(z) + \int \frac{\sum_{k=1}^{m} (w (x) a_{k}(z) - w (z) a_{k} (x)) d{s}_{1,k}(x) }{(z-x) w (x)} +
 w (z) \int \frac{\mathcal{A}_1(x)}{(z-x)} \frac{d\sigma_1(x)}{w(x)} .
\]
For each $k=1,\ldots,m$
\[\left( { w (x) a_{ k}(z) - w (z) a_{ k}(x) }\right)/{(z-x)}
\]
is a polynomial in $z$. Therefore,
\[ P (z)  := a_0(z) + \int \frac{\sum_{k=1}^{m} (w (x) a_{k}(z) - w (z) a_{k} (x)) d{s}_{1,k}(x) }{(z-x) w (x)}
\]
represents a polynomial. Consequently
\[ \mathcal{A}(z) = P (z) + w(z) \int \frac{\mathcal{A}_1(x) d\sigma_1(x)}{(z-x)w (x)} =
w(z) \mathcal{O}(1/z^{N}), \quad z\to \infty.
\]
These equalities imply that
\[  P (z) = w(z)\mathcal{O}(1/z), \qquad z \to \infty,
\]
Therefore, $\deg P < \deg w$ and is equal to zero at all the zeros of $w$. Hence $P \equiv 0$. (Should $w$ be a constant polynomial likewise we get that $P \equiv 0$.) Thus, we have proved \eqref{eq:3}.

From our assumptions and \eqref{eq:3}, it follows that
\[ \frac{\mathcal{A}(z)}{w(z)} = \int \frac{\mathcal{A}_1(x)}{(z-x)} \frac{d\sigma_1(x)}{w(x)} = \mathcal{O}(1/z^{N}), \qquad z \to \infty.
\]
Suppose that $N \geq 2$.  We have the asymptotic expansion
\[ \int \frac{\mathcal{A}_1(x)}{(z-x)} \frac{d\sigma_1(x)}{w(x)} =
\]
\[  \sum_{\nu=0}^{N -2} \frac{d_{ \nu}}{z^{\nu +1}} + \int \frac{x^{N-1}\mathcal{A}_1(x)}{z^{N -1}(z-x) } \frac{d \sigma_1(x)}{w (x)}= \sum_{\nu=0}^{N-2} \frac{d_{ \nu}}{z^{N +1}} + \mathcal{O}( {1}/{z^{N}}), \quad z \to \infty,
\]
where
\[ d_{ \nu} = \int x^{\nu}  \mathcal{A}_1(x)   \frac{d\sigma_1(x)}{ w (x)} ,\qquad \nu =0,\ldots,N -2.
\]
Therefore,
\[ d_{ \nu} = 0, \qquad \nu=0,\ldots,N -2,
\]
which is \eqref{eq:4}.

Suppose that $\mathcal{A}_1$ has at most
$\widetilde{N} < N -1$ sign changes in  $\stackrel{\circ}{\Delta}_1 $ at the points $x_1,\ldots, x_N$. Take $q(x) = \prod_{k=1}^{\widetilde{N}} (x-x_k)$. According to $\eqref{eq:4}$
\[ \int q(x) \mathcal{A}_1(x)  \frac{d\sigma_1(x)}{ w (x)} = 0
\]
which is absurd because $q (a_1  + \sum_{k=2}^m a_k  \widehat{s}_{2,k}) /w  $ has constant sign in $\stackrel{\circ}{\Delta}_1 $ and $\sigma_1$ is a measure with constant sign in $\stackrel{\circ}{\Delta}_1 $ whose support contains infinitely many points. Thus, the number of sign changes must be greater or equal to $N -1$ as claimed.
\end{proof}

In \cite[Lemma 2.10]{FL4}, several formulas involving ratios of Cauchy transforms were proved. The most useful ones  in this paper establish that
\begin{equation} \label{4.4}
\frac{\widehat{s}_{1,k}}{\widehat{s}_{1,1}} =
\frac{|s_{1,k}|}{|s_{1,1}|} - \langle \tau_{1,1},\langle s_{2,k},\sigma_1
\rangle \widehat{\rangle}  , \qquad  1=j < k \leq m.
\end{equation}

We are ready for the

\begin{proof}[Proof of Theorem \ref{unicidad}] Let $(a_{{\bf n},0},\ldots,a_{{\bf n},m})$ be a  type I multi-point Hermite-Pad\'e approximation of $(\widehat{s}_{1,1},\ldots,\widehat{s}_{1,m})$ with respect to $w_{\bf n}$. From Definition \ref{type1}, formulas \eqref{ortog} and \eqref{resto} follow directly from \eqref{eq:4} and \eqref{eq:3}, respectively. Relation \eqref{a0} is obtained from \eqref{resto} solving for $a_{{\bf n},0}$.

In the proof of Lemma  \ref{reduc} we saw that \eqref{ortog} implies that  $\mathcal{A}_{{\bf n},1}$ has at least $|{\bf n}|-1$ sign changes in  $\stackrel{\circ}{\Delta}_1 $.
We have that $(s_{2,2},\ldots,s_{2,m}) = \mathcal{N}(\sigma_2,\ldots,\sigma_m)$ forms a Nikishin system. According to \cite[Theorem 1.1]{FL4},  $\mathcal{A}_{{\bf n},1}$ can have at most $|{\bf n}| -1$ zeros in $\mathbb{C} \setminus \Delta_2$. Taking account of what we proved  previously, it follows that $\mathcal{A}_{{\bf n},1}$ has exactly $|{\bf n}|-1$ simple zeros in  $\stackrel{\circ}{\Delta}_1$  and it has no other zero in $\mathbb{C} \setminus \Delta_2$. This is true for any ${\bf n} \in \mathbb{Z}_+^m  \setminus \{\bf 0\}$.

Suppose that for some ${\bf n} \in \mathbb{Z}_+^m  \setminus \{\bf 0\}$  and some $j \in \{1,\ldots,m\}$, we have that $\deg a_{{\bf n},j} = \widetilde{n}_j -1 < n_j -1$. Then, according to \cite[Theorem 1.1]{FL4}  $\mathcal{A}_{{\bf n},1}$ could have at most $|{\bf n}| - n_j + \widetilde{n}_j -1 \leq |{\bf n}| -2$ zeros in $\mathbb{C} \setminus \Delta_2$. This is absurd because we have proved that it   has $|{\bf n}| -1$ zeros in  $\stackrel{\circ}{\Delta}_1$.

Now, suppose that for some ${\bf n} \in \mathbb{Z}_+^m \setminus \{\bf 0\}$, there exist two non collinear type I multi-point Pad\'e approximants  $\left(a_{{\bf n},0}, \ldots, a_{{\bf n},m}\right)$ and $\left(\widetilde{a}_{{\bf n},0}, \ldots, \widetilde{a}_{{\bf n},m}\right)$  of $(\widehat{s}_{1,1},\ldots,\widehat{s}_{1,m})$ with respect to $w_{\bf n}$. From \eqref{a0} it follows that $\left(a_{{\bf n},1}, \ldots, a_{{\bf n},m}\right)$ and $\left(\widetilde{a}_{{\bf n},1}, \ldots, \widetilde{a}_{{\bf n},m}\right)$ are not collinear. We know that $\deg a_{{\bf n},j} = \deg \widetilde{a}_{{\bf n},j} = n_j -1, j=1,\ldots,m$. Consequently, there exists some constant $C$ such that $\left({a}_{{\bf n},1}-C\widetilde{a}_{{\bf n},1}, \ldots, {a}_{{\bf n},m}-C \widetilde{a}_{{\bf n},m}\right) \neq {\bf 0}$ and  $\deg(a_{{\bf n},{j}} - C\widetilde{a}_{{\bf n}, {j}}) < n_{ {j}} -1$ for some $j \in \{1,\ldots,m\}$. By linearity, $\left({a}_{{\bf n},0}-C\widetilde{a}_{{\bf n},0}, \ldots, {a}_{{\bf n},m}-C \widetilde{a}_{{\bf n},m}\right)$ is a multi-point type I Hermite-Pad\'e approximant of $(\widehat{s}_{1,1},\ldots,\widehat{s}_{1,m})$ with respect to $w_{\bf n}$. This is not possible because $\deg(a_{{\bf n},{j}} - C\widetilde{a}_{{\bf n}, {j}}) < n_{ {j}} -1$. Therefore, non-collinear solutions cannot exist.

We still need to show that $\deg a_{{\bf n}_0} = n_0-1$. To this end we need to transform $\mathcal{A}_{{\bf n},0}$. Let $j$ be the first component of $\bf n$ such that $n_j = \max_{k=1,\ldots,m} n_k $. Since $n_0 = n_j -1$, we have that either $j=1$ or $n_0 \geq n_k, k=1,\ldots,j-1$. If $j=1$, using \eqref{s22} and \eqref{4.4} it follows that
\[ \mathcal{B}_{{\bf n},0} := \frac{\mathcal{A}_{{\bf n},0}}{\widehat{s}_{1,1}} = \ell_{1,1} a_{{\bf n},0} + \sum_{k=1}^m \frac{|s_{1,k}|}{|s_{1,1}|} a_{{\bf n},k} + a_{{\bf n},0} \widehat{\tau}_{1,1} - \sum_{k=2}^m a_{{\bf n},k} \langle \tau_{1,1}, \langle s_{2,k},\sigma_1 \rangle \widehat{\rangle},
\]
where
\[  \mathcal{B}_{{\bf n},0}/w_{\bf n} \in \mathcal{H}(\mathbb{C} \setminus \Delta_1), \qquad  \mathcal{B}_{{\bf n},0}(z)/w_{\bf n}(z)= \mathcal{O}(1/z^{|{\bf n}|-1}), \quad z\to \infty.
\]
Using Lemma \ref{reduc} it follows that
\[ \int x^{\nu}  \mathcal{B}_{{\bf n},1}(x)  \frac{d\tau_{1,1}(x)}{w_{\bf n}(x)}, \qquad \nu = 0,\ldots, |{\bf n}|-3.
\]
where $\mathcal{B}_{{\bf n},1} = a_{{\bf n},0} - \sum_{k=2}^m a_{{\bf n},k} \langle   \langle\sigma_2,\sigma_1 \rangle,\sigma_3,\ldots,\sigma_k \widehat{\rangle}$. Hence $\mathcal{B}_{{\bf n},1}$ has at least $|{\bf n}|-2$ sign changes in $\stackrel{\circ}{\Delta}_1$. According to \cite[Theorem 1.1]{FL4} the linear form $\mathcal{B}_{{\bf n},1}$ has at most $\deg a_{{\bf n},0} + n_2+\cdots+ n_m$ zeros in all of $\mathbb{C} \setminus \Delta_2$. Should  $\deg a_{{\bf n},0} \leq n_0-2$,   we would have that $\deg a_{{\bf n},0} + n_2+\cdots+ n_m \leq |{\bf n}|-3$ which contradicts that $\mathcal{B}_{{\bf n},1}$  has at least $|{\bf n}|-2$ zeros in $\stackrel{\circ}{\Delta}_1$. Thus, when $j=1$ it is true that $\deg a_{{\bf n},0} = n_0 -1$. In general, the proof is similar as we will see.

Suppose that $j$, as defined in the previous paragraph, is $\geq 2$. Then, either $n_0 = n_k, k=1,\ldots,{j-1}$ or there exists $\overline{\jmath} < j$ for which $n_0 = n_k, k=1,\ldots,{\overline{\jmath}-1}$ and $n_0 > n_{\overline{\jmath}}$. In the first case, applying \cite[Lemma 2.12]{FL4},  we obtain that there exists a Nikishin system $(s_{1,1}^*,\ldots,s_{1,m}^*) =
{\mathcal{N}}(\sigma_1^*,\ldots,\sigma_m^*)$, a multi-index $\,{\bf n}^* = (n_0^*,\ldots,n_m^*) \in {\mathbb{Z}}_+^{m+1}$
which is a permutation of ${\bf n}$ with $n_0^* = n_j$, and polynomials with real coefficients $a_{{\bf n},k}^*, \deg a_{{\bf n},k}^* \leq n_k^* -1, k=0,\ldots,m$, such that
\[ \frac{\mathcal{A}_{{\bf n},0}}{\widehat{s}_{1,j}} =  a_{{\bf n},0}^* + \sum_{k=1}^m a_{{\bf n},k}^*. \widehat{s}_{1,k}^*
\]
Due to the structure of the values of the components of the multi-index $a_{{\bf n},j}^* = (-1)^ja_{{\bf n},0}$ and $ n_j^* = n_0$ (see formula (31) in \cite{FL3}). We can proceed as before and find that $\deg a_{{\bf n},j}^* = n_j^* -1, j=1,\ldots,m$. In particular, $\deg a_{{\bf n},j} = n_0 -1$. In the other case, \cite[Lemma 2.12]{FL4} gives that
\[ \frac{\mathcal{A}_{{\bf n},0}}{\widehat{s}_{1,j}} =  a_{{\bf n},0}^* + \sum_{k=1}^m a_{{\bf n},k}^*, \widehat{s}_{1,k}^*
\]
where $a_{{\bf n}\overline{\jmath}}^* = \pm a_{{\bf n},0} + C a_{{\bf n},\overline{\jmath}},$  $C\neq 0$ is some constant, and $n_{\overline{\jmath}}^* = n_0$ (see formula (31) in \cite{FL3}). Repeating the arguments employed
above, we obtain that $\deg a_{{\bf n},j}^* = n_j^* -1, j=1,\ldots,m$. In particular, $\deg a_{{\bf n},0} = n_0 -1$. because we already know that $\deg a_{{\bf n},\overline{\jmath}} = n_{\overline{\jmath}} -1< n_0 -1$.
\end{proof}

\begin{remark} We wish to point out that in the statement of \cite[Theorem 1.1]{FL4} there is a missprint on the last line where $ {\mathbb{C}}$ should replace $\overline{\mathbb{C}}$.  That is, it should refer to zeros at finite points. This can be checked looking at the statements of \cite[Lemmas 2.1, 2.2]{FL4} and the proof of \cite[Theorem 1.1]{FL4} itself.
\end{remark}

The notion of convergence in Hausdorff content plays a central  role in the proof of Theorem \ref{converge}. Let $B$ be a subset of the complex plane $\mathbb{C}$. By
$\mathcal{U}(B)$ we denote the class of all coverings of $B$ by at
most a numerable set of disks. Set
$$
h(B)=\inf\left\{\sum_{i=1}^\infty
|U_i|\,:\,\{U_i\}\in\mathcal{U}(B)\right\},
$$
where $|U_i|$ stands for the radius of the disk $U_i$. The quantity
$h(B)$ is called the $1$-dimensional Hausdorff content of the
set $B$.

Let $(\varphi_n)_{n\in\mathbb{N}}$ be a sequence of complex functions
defined on a domain $D\subset\mathbb{C}$ and $\varphi$ another
function defined on $D$ (the value $\infty$ is permitted). We say that
$(\varphi_n)_{n\in\mathbb{N}}$ converges in Hausdorff content to
the function $\varphi$ inside $D$ if for each compact
subset $\mathcal{K}$ of $D$ and for each $\varepsilon
>0$, we have
$$
\lim_{n\to\infty} h\{z\in K :
|\varphi_n(z)-\varphi(z)|>\varepsilon\}=0
$$
(by convention $\infty \pm \infty = \infty$). We denote this writing $h$-$\lim_{n\to \infty} \varphi_n =
\varphi$ inside $D$.

To obtain Theorem \ref{converge} we first prove \eqref{fund1} with convergence in Hausdorff content in place of uniform convergence (see Lemma \ref{haus} below). We need the following notion.

Let $s \in \mathcal{M}(\Delta)$ where $\Delta$ is contained in a half line of the real axis. Fix an arbitrary $\kappa \geq -1$. Consider a sequence of polynomials $(w_n)_{n \in \Lambda},  \Lambda \subset \mathbb{Z}_+,$ such that $\deg w_n = \kappa_n \leq 2n + \kappa +1$, whose zeros lie in $\mathbb{R} \setminus \Delta$. Let $(R_n)_{n \in \Lambda}$ be a sequence of rational functions $R_n = p_n/q_n$ with real coefficients satisfying the following conditions for each $n \in \Lambda$:
\begin{itemize}
\item[a)] $\deg p_n \leq n + \kappa,\quad  \deg q_n \leq n, \quad q_n \not\equiv 0,$
\item[b)] $ {(q_n \widehat{s} - p_n)(z)}/{w_n} = \mathcal{O}\left( {1}/{z^{n+1 - \ell}}\right) \in \mathcal{H}(\mathbb{C}\setminus \Delta), z \to \infty,$ where $\ell \in \mathbb{Z}_+$ is fixed.
\end{itemize}
We say that $(R_n)_{n \in \Lambda}$ is a sequence of incomplete diagonal multi-point Pad\'e approximants of $\widehat{s}$.

Notice that in this construction for each $n \in \Lambda$ the number of free parameters equals $2n + \kappa +2$ whereas the number of homogeneous linear equations to be solved in order to find $q_n$ and $p_n$ is equal to $2n + \kappa - \ell + 1$. When $\ell =0$ there is only one more parameter than equations and  $R_n$ is defined uniquely coinciding with a (near) diagonal multi-point Pad\'e approximation. When $\ell \geq 1$ uniqueness is not guaranteed, thus the term incomplete.

For sequences of incomplete diagonal multi-point Pad\'e approximants, the following Stieltjes type theorem was proved in \cite[Lemma 2]{BL} in terms of convergence in Hausdorff content.

\begin{lemma} \label{BusLop}
Let $s \in \mathcal{M}(\Delta)$ be given where $\Delta$ is contained in a half line. Assume that $(R_n)_{n \in \Lambda}$ satisfies {\rm a)-b)} and either the number of zeros of $w_n$ lying on a closed bounded segment of $\mathbb{R} \setminus \Delta$ tends to infinity as $n\to\infty, n \in \Lambda$, or $s$ satisfies Carleman's condition.
Then
\[ h-\lim_{n \in \Lambda} R_n = \widehat{s},\qquad \mbox{inside}\qquad  \mathbb{C} \setminus \Delta.
\]
\end{lemma}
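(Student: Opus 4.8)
The plan is to run the classical Markov--Stieltjes argument, adjusted so that the boundedly many zeros of the denominators that may drift off $\Delta$ (a feature of the incomplete setting, governed by $\ell$) are absorbed by working with convergence in Hausdorff content rather than uniform convergence.

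\emph{Step 1: orthogonality, error formula, and the zeros of $q_n$.} A direct computation with Cauchy transforms, using only that the zeros of $w_n$ lie in $\mathbb{R}\setminus\Delta$, rewrites {\rm a)-b)} as the orthogonality relations
\[
\int x^{\nu}\,q_n(x)\,\frac{ds(x)}{w_n(x)}=0,\qquad \nu=0,\ldots,n-\ell-1,
\]
together with the error representation
\[
\widehat{s}(z)-R_n(z)=\frac{w_n(z)}{q_n(z)}\int\frac{q_n(x)}{w_n(x)}\,\frac{ds(x)}{z-x}\,,\qquad q_n(z)\ne 0 .
\]
Because $\Delta$ is an interval on which $w_n$ has a constant sign, $ds/w_n$ is, up to that sign, a measure of $\mathcal{M}(\Delta)$, so the orthogonality relations force $q_n$ to change sign at least $n-\ell$ times in $\stackrel{\circ}{\Delta}$. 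Normalizing $q_n$ and writing $q_n=q_n^{[1]}q_n^{[2]}$ with $q_n^{[1]}$ the monic factor carrying $m_n\ge n-\ell$ zeros in $\stackrel{\circ}{\Delta}$ and $\deg q_n^{[2]}\le\ell$, we see that the only poles of $R_n$ in $\mathbb{C}\setminus\Delta$ are the at most $\ell$ zeros of $q_n^{[2]}$.

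\emph{Step 2: a normal family of Cauchy transforms.} Decomposing $R_n$ into simple fractions, one writes $R_n=\widehat{\nu}_n+\mathcal{B}_n$, where $\nu_n$ is supported on the $m_n$ zeros of $q_n^{[1]}$ in $\stackrel{\circ}{\Delta}$ and, up to a bounded number of point masses, is a nonnegative measure of total variation bounded uniformly in $n$ (its weights being, up to that defect, Christoffel numbers), while $\mathcal{B}_n$ has bounded complexity, with at most $\ell$ poles off $\Delta$. On any compact $\mathcal{K}\subset\mathbb{C}\setminus\Delta$, the set on which $|\mathcal{B}_n|$ exceeds a prescribed level can be covered by at most $\ell$ disks of arbitrarily small total radius, so $\mathcal{B}_n$ disappears in a limit taken in Hausdorff content --- this is precisely the point where uniform convergence cannot be expected. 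For the main term, a subsequence of $(\nu_n)$ converges, in the vague topology on $\mathbb{R}$, to a nonnegative measure $\nu$, and then $\widehat{\nu}_n\to\widehat{\nu}$ uniformly on compact subsets of $\mathbb{C}\setminus\Delta$.

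\emph{Step 3: identification of the limit.} The interpolation condition at infinity in {\rm b)} says that the first $\asymp n$ coefficients in the expansion of $R_n$ about $\infty$ agree with the moments $c_k=\int x^k\,ds(x)$; passing through the decomposition of Step 2 (which only affects boundedly many of them, and by a bounded amount), it follows that for each fixed $k$ the $k$-th moment of $\nu_n$ is $c_k+o(1)$. If $s$ satisfies Carleman's condition, this precludes any escape of mass to infinity in the vague limit and renders the limiting moment problem determinate, so $\nu=s$ and the subsequential limit of $R_n$ is $\widehat{s}$. In the alternative case, infinitely many zeros of $w_n$ lie on a fixed closed bounded segment $[a,b]\subset\mathbb{R}\setminus\Delta$ and hence accumulate at some $\xi\in[a,b]$; since $R_n$ interpolates $\widehat{s}$ at those zeros (with multiplicities) and the convergence to $\widehat{\nu}$ is uniform near $\xi$ off a set of arbitrarily small content, $\widehat{\nu}$ agrees with $\widehat{s}$ at $\xi$ to arbitrarily high order, hence $\widehat{\nu}\equiv\widehat{s}$ on $\mathbb{C}\setminus\Delta$ by analyticity, and again the limit is $\widehat{s}$. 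In both cases every subsequence of $(R_n)_{n\in\Lambda}$ has a further subsequence converging in Hausdorff content to $\widehat{s}$ inside $\mathbb{C}\setminus\Delta$, and the usual subsequence argument gives $h$-$\lim_{n\in\Lambda}R_n=\widehat{s}$ there.

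\emph{Main obstacle.} The heart of the matter is the tightness underlying Step 2 when $\Delta$ is unbounded: one must rule out that zeros of $q_n^{[1]}$ --- equivalently, mass of $\nu_n$ --- escape to infinity, and the only information available for this is the uniform control of the first $\asymp n$ moments coming from {\rm b)}; turning that control into tightness, and into uniqueness of the limit, is exactly what Carleman's condition is for, the alternative hypothesis doing the same job through analytic continuation from the accumulating interpolation nodes. Secondary technical points --- that the simple-fraction coefficients are positive up to a bounded defect (which uses the orthogonality and the interlacing of the zeros of $q_n^{[1]}$), and that the bounded-degree polynomial part of $R_n$, when present, tends to zero --- are routine once this structure is in place.
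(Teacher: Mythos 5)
The paper itself offers no proof of this lemma: it is quoted from Bustamante--L\'opez Lagomasino \cite[Lemma 2]{BL}, so there is nothing internal to compare your argument against, and it has to stand on its own. Your Step 1 is correct (with $N=n+1-\ell$ in Lemma \ref{reduc} one gets the orthogonality relations for $\nu=0,\ldots,n-\ell-1$, the representation $\widehat s-R_n=\frac{w_n}{q_n}\int\frac{q_n(x)}{w_n(x)}\frac{ds(x)}{z-x}$, and the $\geq n-\ell$ sign changes of $q_n$ in $\stackrel{\circ}{\Delta}$), and the overall architecture --- Markov--Stieltjes plus Hausdorff content to absorb the $\leq\ell$ spurious poles --- is the right intuition. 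But two load-bearing claims fail as stated. In Step 2 you assert that $\mathcal{B}_n$, having at most $\ell$ poles off $\Delta$, vanishes in Hausdorff content because the set $\{|\mathcal{B}_n|>\varepsilon\}$ is covered by $\ell$ disks of arbitrarily small total radius. That is false without control of the residues: a single pole at $a_n\in\mathcal{K}$ with residue $c_n$ makes $\{|\mathcal{B}_n|>\varepsilon\}$ contain a disk of radius comparable to $|c_n|/\varepsilon$, which does not shrink unless $c_n\to 0$. Showing that the spurious residues tend to zero is essentially the content of the lemma, not a free consequence of "bounded complexity"; relatedly, your claim that $\nu_n$ has uniformly bounded total variation "up to a bounded defect" is circular, since $\leq\ell$ weights of uncontrolled sign and size can inflate both the positive and negative parts. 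The known proofs avoid partial fractions entirely: one inserts the monic polynomial $Q_{n,1}$ of the $\geq n-\ell$ sign changes into the error integral to get $\widehat s-R_n=\frac{w_n}{q_nQ_{n,1}}\int\frac{q_nQ_{n,1}}{w_n}\frac{ds(x)}{z-x}$ with an integrand of constant sign on $\Delta$, and then the exceptional set is where the polynomial $q_nQ_{n,1}$ is small, controlled by a Cartan-type estimate --- that is where Hausdorff content genuinely enters.

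The second gap is in Step 3: you identify the limit by claiming that b) forces the first $\asymp n$ coefficients of $R_n$ at infinity to agree with the moments $c_k$ of $s$. That holds only when $\deg w_n$ is small. Here $\kappa_n$ may be as large as $2n+\kappa+1$, in which case $(q_n\widehat s-p_n)(z)=w_n(z)\,\mathcal{O}(z^{-(n+1-\ell)})=\mathcal{O}(z^{n+\kappa+\ell})$ and b) imposes no interpolation at infinity at all; every condition sits at the finite zeros of $w_n$. This is exactly the regime in which the paper applies the lemma (in the proof of Lemma \ref{haus} the polynomial $w_{{\bf n},m-1}$ has degree $\approx 2\deg a_{{\bf n},m}$), so the moment-matching identification of the vague limit collapses precisely where it is needed. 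Carleman's condition does enter through the moments of $s$, but via estimates on quantities such as $\int Q_{n,1}^2\,d|s|/|w_n|$ arising from the varying orthogonality, not via the Taylor expansion of $R_n$ at infinity. As written, the proposal is therefore not a proof; both Step 2 and Step 3 need to be replaced by the integral-representation argument sketched above.
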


We will need to use Lemma \ref{BusLop} for different measures and Theorem \ref{momentos} comes in our aid.

\begin{proof}[Proof of Theorem \ref{momentos}] Without loss of generality, we can assume that $\Delta \subset \mathbb{R}_+$ and that $\sigma_1$ is positive.
Let $(c_n)_{n \in \mathbb{Z}_+}$ and $(\widetilde{c}_n)_{n \in \mathbb{Z}_+}$ denote the sequences of moments of $\sigma_1$ and $s_{1,2}$, respectively. Since $\widehat{\sigma}_2$ has constant sign on $\mathbb{R}_+$, we have that
\[ |\widetilde{c}_n| = \int x^{n} |\widehat{\sigma}_2(x)| d\sigma_1(x) \leq \int_0^1 x^{n} |\widehat{\sigma}_2(x)| d\sigma_1(x) + \int_1^\infty x^{n} |\widehat{\sigma}_2(x)| d\sigma_1(x)  \leq
 |s_{1,2}| + C c_n,
\]
where $C = \max \{|\widehat{\sigma}_2(x)|: x \in [1,+\infty)\} < \infty$ because $\lim_{x\to \infty}\widehat{\sigma}_2(x) =0$. Consequently,
\[ \sum_{n \geq 1} |\widetilde{c}_n|^{-1/2n} \geq \sum_{n \geq 1} (|s_{1,2}| + C c_n )^{-1/2n} \geq
\sum_{\{n:C c_n < |s_{1,2}|\}}(2|s_{1,2}|)^{-1/2n} + \sum_{\{n:C c_n \geq |s_{1,2}|\}}(2C c_n  )^{-1/2n}.
\]
If the first sum after the last inequality contains infinitely many terms then that sum is already divergent. If it has finitely many terms then Carleman's condition for $\sigma_1$ guarantees that the second sum is divergent. Thus, $s_{1,2}$ satisfies Carleman's condition.

To prove the second part we need to express the moments $(d_n)_{n\in \mathbb{Z}_+} $ of $\tau_1$ in terms of the moments of $\sigma_1$. In the proof of \cite[Lemma 2.3]{FL4} we showed that the moments $(d_n)_{n \in \mathbb{Z}_+}$ are finite (since all the moments of $\sigma_1$ are finite) and  
can be obtained solving  the system of equations
\[
\begin{array}{ccl}
1 & = & d_{-2} c_0 \\
0 & = & d_{-2} c_1 + d_{-1} c_0\\
0 & = & d_{-2} c_2 + d_{-1} c_1 + d_0 c_0\\
\vdots & = & \vdots  \\
0 & = & d_{-2} c_{n+2} + d_{-1} c_{n+1} + \cdots + d_n c_0\,\,.
\end{array}
\]
(The values of $d_{-2}$ and $d_{-1}$ turn out to be the coefficients $a$ and $b$, respectively, of the polynomial $\ell_1$ in the decomposition \eqref{s22} of $1/\widehat{\sigma}_1$.) Read the paragraph after formula (9) in \cite{FL4}.

To find $d_n$ we apply Cramer's rule and we get
\begin{equation} \label{dn} d_n = (-1)^{n}\Omega_n/c_0^{n+3}
\end{equation}
where $c_0^{n+3}$ gives the value of the determinant of the system and
\[ \Omega_n = \left|
\begin{array}{cccc}
c_1 & c_0 & 0 & \cdots \\
c_2 & c_1 & \ddots & \ddots \\
\vdots & \ddots & \ddots & \ddots \\
c_{n+2}& c_{n+1} & \cdots & c_1
\end{array}
\right|
\]
is the determinant of a  lower Hessenberg matrix of dimension $n+2$ with constant diagonal terms. The expansion of the determinant $\Omega_n$ has several characteristics:
\begin{itemize}
\item It has exactly $2^{n+1}$ non zero terms.
\item For each $n \geq 0$, the sum of the subindexes of each non zero term equals $n+2$ (if a factor is repeated its subindex is counted as many times as it is repeated).
\item The number of factors in each term is equal to $n+2$.
\end{itemize}

The last assertion is trivial.
To calculate the number of non zero terms notice that from the first row we can only choose 2 non zeros entries. Once this is done, from the second row we can only choose 2 non zero entries, and so forth, until we get to the last row where we only have left one non zero entry to choose.

Regarding the second assertion we use induction. When $n=0$ it is obvious. Assume that each non zero term in the expansion of $\Omega_n$ has the property that the sum of its subindexes equals $n+2$  and let us show that each non zero term in the expansion of $\Omega_{n+1}$ has the property that the sum of its subindexes equals $n+3$. Expanding $\Omega_{n+1}$ by its first row we have
\[ \Omega_{n+1} = c_1 \Omega_n - c_0 \Omega_{n}^*,
\]
where $\Omega_n^*$ is obtained substituting the first column of $\Omega_n$ by the column vector $(c_2,\ldots,c_{n+3})^t$ (the superscript $t$ means taking transpose). Using the induction hypothesis it   easily follows that in each term arising from $c_1 \Omega_n$ and $c_0 \Omega_{n}^*$ the sum of its subindexes must equal $n+3$.

Using the properties proved above we obtain that the general expression of $\Omega_n$ is
\[ \Omega_n = \sum_{j=1}^{n+2 }\sum_{\alpha_1 + \cdots + \alpha_{j} = n+2} \varepsilon_{\alpha} c_0^{n+2-j}c_{\alpha_1}\cdots c_{\alpha_{j}},
\]
where $\alpha = (\alpha_1, \ldots, \alpha_{j}), 1 \leq \alpha_k \leq n+2, 1\leq k \leq j$ and $\varepsilon_{\alpha} = \pm1$. Thus
\begin{equation} \label{omegan} |\Omega_n| \leq \sum_{j=1}^{n+2 }\sum_{\alpha_1 + \cdots + \alpha_{j} = n+2}  c_0^{n+2-j}c_{\alpha_1}\cdots c_{\alpha_{j}}.
\end{equation}
In this sum, there is  there is only one term which contains the factor $c_{n+2}$ and that is when $j=1$. That term is $c_0^{n+1}c_{n+2}$. In the rest of the terms $1 \leq {\alpha_k} \leq n+1$. Let us prove that
\begin{equation} \label{cota} c_0^{n+2-j}c_{\alpha_1}\cdots c_{\alpha_{j}} \leq c_0^{n+1}c_{n+2}\qquad \mbox{for all} \qquad \alpha.
\end{equation}
In fact, using the Holder inequality on each factor except the first, it follows that
\[ c_0^{n+2-j}c_{\alpha_1}\cdots c_{\alpha_{j}} \leq c_0^{n+2-j} \left(\int x^{n+2} d\sigma_1(x)\right)^{\sum_{k=1}^j \alpha_k/(n+2)}  \left(\int  d\sigma_1(x)\right)^{j - (\sum_{k=1}^j \alpha_k)/(n+2)}.
\]
It remains to employ that $\sum_{k=1}^j \alpha_k=n+2$ to complete the proof of \eqref{cota}.

From \eqref{dn}, \eqref{omegan}, and \eqref{cota},  we have that
\[ d_n \leq 2^{n+1}c_{n+2}/c_0^2
\]
and the Carleman condition for $\tau_1$ readily follows. \end{proof}

An immediate consequence of Theorem \ref{momentos} is the following
\begin{corollary}  \label{momentosNik} Let $(s_{1,1},\ldots,s_{1,m}) = \mathcal{N}(\sigma_1,\ldots,\sigma_m)$ be such that $\Delta_1$ is contained in a half line and $\sigma_1$ satisfies Carleman's condition. Then, for all $j=1,\ldots,m$ we have that $s_{1,j}$ and $\tau_{1,j}$ satisfies Carleman's condition.
\end{corollary}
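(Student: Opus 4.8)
The plan is to deduce both assertions of Corollary \ref{momentosNik} from Theorem \ref{momentos} by exhibiting, for each $j$, a two-measure Nikishin system in which $\sigma_1$ (resp.\ $s_{1,j}$) plays the role of the first generating measure. First I would make the usual harmless reductions: Carleman's condition is invariant under affine changes of the variable and under multiplication of a measure by $-1$, so we may assume $\Delta_1\subset\mathbb{R}_+$ and $\sigma_1>0$; moreover, since $\Delta_1$ lies in a half line and consecutive intervals $\Delta_j,\Delta_{j+1}$ meet in at most one point, every $\Delta_j$ lies in a half line as well, so the decomposition \eqref{s22} and Theorem \ref{momentos} are available for all the measures involved. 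Recall also that $s_{1,j}\in\mathcal{M}(\Delta_1)$ for every $j$, directly from \eqref{eq:autom}.

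For the measures $s_{1,j}$ with $2\le j\le m$, the key observation is merely the definition of the iterated operation: setting $\mu_j:=\langle\sigma_2,\ldots,\sigma_j\rangle$, one has $\mu_j\in\mathcal{M}(\Delta_2)$ by \eqref{eq:autom}, and $s_{1,j}=\langle\sigma_1,\sigma_2,\ldots,\sigma_j\rangle=\langle\sigma_1,\mu_j\rangle$, while $\widehat{\mu}_j\in L_1(\sigma_1)$ is part of the standing hypothesis (it is exactly what makes the consecutive product $\langle\sigma_1,\mu_j\rangle$ a well-defined element of $\mathcal{M}(\Delta_1)$ in \eqref{eq:autom}). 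Hence $(s_{1,1},s_{1,j})=\mathcal{N}(\sigma_1,\mu_j)$ is a genuine Nikishin system of two measures whose first generating measure $\sigma_1$ satisfies Carleman's condition, and Theorem \ref{momentos} gives at once that $s_{1,j}$ satisfies Carleman's condition. The case $j=1$ is the hypothesis, since $s_{1,1}=\sigma_1$.

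For the inverse measures $\tau_{1,j}$, $1\le j\le m$, I would use that, by the previous step, $s_{1,j}\in\mathcal{M}(\Delta_1)$ satisfies Carleman's condition and $\Delta_1$ lies in a half line, and that by the uniqueness of the decomposition \eqref{s22} the inverse measure of $s_{1,j}$ in the sense of \eqref{s22} is precisely $\tau_{1,j}$. What is needed is the statement that the inverse measure of any element of $\mathcal{M}(\Delta)$, with $\Delta$ contained in a half line, satisfying Carleman's condition again satisfies Carleman's condition; this is exactly what the second half of the proof of Theorem \ref{momentos} establishes, since the bound obtained there, $d_n\le 2^{n+1}c_{n+2}/c_0^2$ (cf.\ \eqref{dn}), involves only the moments $c_n$ of the first measure. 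To phrase this strictly within the statement of Theorem \ref{momentos}, it suffices to attach an auxiliary finite positive measure $\mu$ supported on a bounded interval $\Delta'$ disjoint from $\Delta_1$: then $\widehat{\mu}$ is bounded and of constant sign on $\Delta_1$, so $\widehat{\mu}\in L_1(s_{1,j})$ and $\langle s_{1,j},\mu\rangle=\widehat{\mu}\,ds_{1,j}\in\mathcal{M}(\Delta_1)$, i.e.\ $(s_{1,j},\langle s_{1,j},\mu\rangle)=\mathcal{N}(s_{1,j},\mu)$ is a two-measure Nikishin system whose first generating measure is $s_{1,j}$; Theorem \ref{momentos} then yields that $\tau_{1,j}$ satisfies Carleman's condition.

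There is essentially no analytic obstacle once Theorem \ref{momentos} is in hand; the only point requiring a little care is checking that the auxiliary pairs $(\sigma_1,\mu_j)$ and $(s_{1,j},\mu)$ genuinely satisfy all the requirements of Definition \ref{Nikishin} (constant sign, the correct convex hull of the support, and integrability of the first Cauchy transform against the first measure), which follow, respectively, from \eqref{eq:autom} and from the boundedness and constant sign of $\widehat{\mu}$ on $\Delta_1$ together with the finiteness of all moments of $s_{1,j}$.
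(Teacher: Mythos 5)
Your proof is correct and follows essentially the same route as the paper: both reduce to Theorem \ref{momentos} via the two-measure system $(s_{1,1},s_{1,j})=\mathcal{N}(\sigma_1,s_{2,j})$ (your $\mu_j$ is exactly $s_{2,j}$), and then invoke the theorem once more to obtain the Carleman condition for the inverse measures. The only difference is that you make explicit, via the auxiliary system $\mathcal{N}(s_{1,j},\mu)$, how the step ``$s_{1,j}$ satisfies Carleman $\Rightarrow \tau_{1,j}$ satisfies Carleman'' fits the literal statement of Theorem \ref{momentos} (whose proof indeed bounds the moments of the inverse measure solely in terms of those of the first generating measure) --- a detail the paper leaves implicit.
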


\begin{proof} For $s_{1,1}$ the assertion is the hypothesis. Let $j\in \{2,\ldots,m\}$. Notice that $s_{1,j} = \langle \sigma_1, s_{2,j} \rangle$ and $(s_{1,1},s_{1,j}) = \mathcal{N}(\sigma_1,s_{2,j})$ so $s_{1,j}, j=2,\ldots,n$ satisfies Carleman's condition due to   Theorem \ref{momentos}. Since $s_{1,j},j=1,\ldots,m$ satisfies Carleman's condition then Theorem \ref{momentos} also gives that $\tau_{1,j},  j=1,\ldots,m$ satisfy Carleman's condition.
\end{proof}

Actually we will use this result for $(s_{m,m},\ldots,s_{m,1}) = \mathcal{N}(\sigma_m,\ldots,\sigma_1)$.

\section{Proof of Theorem \ref{converge}} \label{proofmain}

The first step consists in proving a weaker version of \eqref{fund1}.

\begin{lemma} \label{haus} Assume that the conditions of Theorem \ref{converge} are fulfilled. Then,
for each fixed $j=0,\ldots,m-1$
\begin{equation} \label{convHaus}
h-\lim_{n\in \Lambda}\frac{a_{{\bf n}, j}}{a_{{\bf n},m}} = (-1)^{m-j}\widehat{s}_{m,j+1}, \quad h-\lim_{n\in \Lambda}\frac{a_{{\bf n}, m}}{a_{{\bf n},j}} = (-1)^{m-j}\widehat{s}_{m,j+1}^{-1}\quad \mbox{inside} \quad \mathbb{C} \setminus \Delta_m.
\end{equation}
There exists a constant $C_1$, independent of $\Lambda$, such that for each $j=0,\ldots,m$ and ${\bf n} \in \Lambda,$ the polynomials $a_{{\bf n},j}$  have at least $(|{\bf n}|/m) - C_1$ zeros in $\stackrel{\circ}{\Delta}_j $.
\end{lemma}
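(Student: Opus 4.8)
The plan is to establish \eqref{convHaus} one index at a time, descending from $j=m-1$ to $j=0$, each step reducing matters to the Stieltjes--type theorem of Lemma~\ref{BusLop} applied to the reversed Nikishin system $(s_{m,m},\ldots,s_{m,1})=\mathcal{N}(\sigma_m,\ldots,\sigma_1)$, which is again a Nikishin system by \cite[Lemma~2.9]{FL4}. Two preparatory facts are needed. First, iterating the device used in the proof of Theorem~\ref{unicidad}---from $\mathcal{A}_{{\bf n},j}$ and its orthogonality pass to $\mathcal{A}_{{\bf n},j+1}$ by Lemma~\ref{reduc}, then bound zeros by \cite[Theorem~1.1]{FL4}---one gets, for $j=1,\ldots,m-1$, that $\mathcal{A}_{{\bf n},j}$ has at least $n_j+\cdots+n_m-1$ sign changes in $\stackrel{\circ}{\Delta}_j$ and, apart from a number of exceptional ones bounded through \eqref{cond1}, no other zeros in $\mathbb{C}\setminus\Delta_{j+1}$; writing $Q_{{\bf n}}^{(j)}$ for the monic polynomial carrying these sign-change points, $\mathcal{A}_{{\bf n},j}/Q_{{\bf n}}^{(j)}\in\mathcal{H}(\mathbb{C}\setminus\Delta_{j+1})$ and $\mathcal{A}_{{\bf n},j}/Q_{{\bf n}}^{(j)}=\mathcal{O}(1/z^{\,n_m-O(1)})$ at infinity (using \eqref{cond1} again to compare the orders at infinity). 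Second, since $\Delta_m$ lies in a half-line, whenever $\sigma_m$ satisfies Carleman's condition Corollary~\ref{momentosNik} applied to $\mathcal{N}(\sigma_m,\ldots,\sigma_1)$---precisely the use announced after that corollary---shows that every $s_{m,k}$ and every $\tau_{m,k}$ satisfies it as well.

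For the base case $j=m-1$ we have $\mathcal{A}_{{\bf n},m-1}=a_{{\bf n},m-1}+a_{{\bf n},m}\widehat{\sigma}_m$ and $\widehat{s}_{m,m}=\widehat{\sigma}_m$, so the first fact says exactly that $(-a_{{\bf n},m-1},\,a_{{\bf n},m})$ is an incomplete diagonal multi-point Pad\'e approximant of $\widehat{\sigma}_m$ with respect to $Q_{{\bf n}}^{(m-1)}$: one has $\deg a_{{\bf n},m}=n_m-1$, $\deg a_{{\bf n},m-1}\le n_m-1+\kappa$, $\deg Q_{{\bf n}}^{(m-1)}\le 2(n_m-1)+\kappa+1$ and incompleteness index at most $\ell$, with $\kappa,\ell$ constants fixed by $C$ through \eqref{cond1}, and the zeros of $Q_{{\bf n}}^{(m-1)}$ lie in $\stackrel{\circ}{\Delta}_{m-1}\subset\mathbb{R}\setminus\Delta_m$. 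The hypotheses of Lemma~\ref{BusLop} are met in both cases of Theorem~\ref{converge}: if $\Delta_{m-1}$ is bounded away from $\Delta_m$ the interpolation nodes stay in a fixed set disjoint from $\Delta_m$ and their number grows with $|{\bf n}|$, giving the first alternative; otherwise $\sigma_m$ (hence $s_{m,m}$) satisfies Carleman's condition. Thus $h$-$\lim_{{\bf n}\in\Lambda}(-a_{{\bf n},m-1}/a_{{\bf n},m})=\widehat{\sigma}_m$ inside $\mathbb{C}\setminus\Delta_m$, which is the first relation of \eqref{convHaus} for $j=m-1$, and consequently $h$-$\lim_{{\bf n}\in\Lambda}\mathcal{A}_{{\bf n},m-1}/a_{{\bf n},m}=0$.

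For the inductive step assume \eqref{convHaus} and $h$-$\lim\mathcal{A}_{{\bf n},k}/a_{{\bf n},m}=0$ for every $k>j$. Dividing the integral representation of $\mathcal{A}_{{\bf n},j}$ from Lemma~\ref{reduc} (weight $Q_{{\bf n}}^{(j)}$, giving $\mathcal{A}_{{\bf n},j}/Q_{{\bf n}}^{(j)}=\int \mathcal{A}_{{\bf n},j+1}(x)\,d\sigma_{j+1}(x)/[(z-x)Q_{{\bf n}}^{(j)}(x)]$) by $a_{{\bf n},m}$ and inserting the $h$-smallness of $\mathcal{A}_{{\bf n},j+1}/a_{{\bf n},m}$ on $\stackrel{\circ}{\Delta}_{j+1}$ (the ratios of $Q_{{\bf n}}^{(j)}$ and $a_{{\bf n},m}$ evaluated at $z$ and at $x\in\supp\sigma_{j+1}$ being controlled because their zeros lie respectively on $\Delta_j$ and, up to $O(1)$, on $\Delta_m$) gives $h$-$\lim\mathcal{A}_{{\bf n},j}/a_{{\bf n},m}=0$ inside $\mathbb{C}\setminus(\Delta_{j+1}\cup\Delta_m)$. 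Combining this with $a_{{\bf n},j}/a_{{\bf n},m}=\mathcal{A}_{{\bf n},j}/a_{{\bf n},m}-\sum_{k=j+1}^{m}(a_{{\bf n},k}/a_{{\bf n},m})\,\widehat{s}_{j+1,k}$, the induction hypothesis, and the algebraic identity $(-1)^{m-j}\widehat{s}_{m,j+1}=-\sum_{k=j+1}^{m}(-1)^{m-k}\,\widehat{s}_{m,k+1}\,\widehat{s}_{j+1,k}$ (convention $\widehat{s}_{m,m+1}\equiv 1$; this follows from $\widehat{s}_{p,q}+\widehat{s}_{q,p}=\widehat{\sigma}_p\widehat{\sigma}_q$ and, inductively, from the formulas of \cite[Lemma~2.10]{FL4}), one obtains the first relation of \eqref{convHaus} inside $\mathbb{C}\setminus(\Delta_{j+1}\cup\Delta_m)$, and hence inside all of $\mathbb{C}\setminus\Delta_m$ by the standard descent argument for Hausdorff-content convergence of rational functions of degree $O(|{\bf n}|)$ to a function holomorphic on the larger set. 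The second relation of \eqref{convHaus} follows from the first because $\widehat{s}_{m,j+1}$ is holomorphic and not identically zero on $\mathbb{C}\setminus\Delta_m$, hence has only finitely many zeros on any compact subset, off shrinking neighbourhoods of which convergence in Hausdorff content is stable under $w\mapsto 1/w$. Finally, for the zero count: a comparison-of-zeros (winding number) argument on $\mathbb{C}\setminus\Delta_m$ applied to $\mathcal{A}_{{\bf n},m-1}=Q_{{\bf n}}^{(m-1)}g_{{\bf n}}$ with $g_{{\bf n}}$ holomorphic and nonvanishing there shows that $a_{{\bf n},m}$, which governs the boundary jump of $\mathcal{A}_{{\bf n},m-1}$ across $\Delta_m$, has at least $n_m-O(1)$ sign changes in $\stackrel{\circ}{\Delta}_m$; pushing the reversal to greater depth (rewriting $\mathcal{A}_{{\bf n},j}$ through the measures of $\mathcal{N}(\sigma_m,\ldots,\sigma_1)$ and their inverses via \eqref{s22}) produces for each $j$ an orthogonality relation $\int x^{\nu}a_{{\bf n},j}(x)\,d\mu_{{\bf n},j}(x)=0$, $\nu=0,\ldots,n_m-O(1)$, with $d\mu_{{\bf n},j}$ of constant sign on $\stackrel{\circ}{\Delta}_m$, so that $a_{{\bf n},j}$ likewise has at least $n_m-O(1)\ge|{\bf n}|/m-C_1$ sign changes in $\stackrel{\circ}{\Delta}_m$, with $C_1$ depending only on $m$ and $C$, hence independent of $\Lambda$.

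I expect the main obstacle to lie in the bookkeeping of this reversal: arranging that the common denominator stays exactly $a_{{\bf n},m}$---which is where the exact-degree conclusion $\deg a_{{\bf n},j}=n_j-1$ of Theorem~\ref{unicidad} is indispensable, as it rules out degeneration---that the auxiliary weights keep all their zeros on $\mathbb{R}\setminus\Delta_m$ (so the measures $d\mu_{{\bf n},j}$ genuinely have constant sign on $\stackrel{\circ}{\Delta}_m$), and that the incompleteness indices stay uniformly bounded; this last requirement is precisely what \eqref{cond1} secures, since an unbounded spread $\max_j n_j-\min_k n_k$ would drive those indices to infinity and make Lemma~\ref{BusLop} inapplicable. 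A secondary subtlety is transferring the $h$-smallness of $\mathcal{A}_{{\bf n},j+1}/a_{{\bf n},m}$ through the integral over $\supp\sigma_{j+1}$ in the inductive step, since a set of small Hausdorff content need not be $\sigma_{j+1}$-null; this is handled by a subsequence and normal-families argument as in \cite{BL} and \cite{FL4}.
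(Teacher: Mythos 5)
Your base case $j=m-1$, and your identification of the key tools (Lemma~\ref{reduc}, Lemma~\ref{BusLop}, Corollary~\ref{momentosNik} applied to the reversed system $\mathcal{N}(\sigma_m,\ldots,\sigma_1)$), match the paper. The inductive step, however, is not the paper's argument and contains a genuine gap. You propose to obtain $h$-$\lim \mathcal{A}_{{\bf n},j}/a_{{\bf n},m}=0$ by dividing the representation $\mathcal{A}_{{\bf n},j}/w_{{\bf n},j}=\int \mathcal{A}_{{\bf n},j+1}(x)\,d\sigma_{j+1}(x)/[(z-x)w_{{\bf n},j}(x)]$ by $a_{{\bf n},m}(z)$ and ``inserting'' the $h$-smallness of $\mathcal{A}_{{\bf n},j+1}/a_{{\bf n},m}$ under the integral. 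This cannot be justified with the information available: (i) convergence in Hausdorff content gives no $L^1(\sigma_{j+1})$ control --- the exceptional sets may carry positive $\sigma_{j+1}$-measure and there is no domination off them; and (ii), more fatally, to insert the ratio you must write $\mathcal{A}_{{\bf n},j+1}(x)=[\mathcal{A}_{{\bf n},j+1}(x)/a_{{\bf n},m}(x)]\,a_{{\bf n},m}(x)$ and then control $a_{{\bf n},m}(x)/a_{{\bf n},m}(z)$ for $x\in\supp\sigma_{j+1}$ and $z$ in a compact set elsewhere. Knowing that the zeros of $a_{{\bf n},m}$ accumulate on $\Delta_m$ does not bound this quantity: it is a ratio of values of a polynomial of degree $n_m-1\to\infty$ at points on different intervals, generically exponentially large or small in $|{\bf n}|$. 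No ``subsequence and normal-families argument as in \cite{BL}, \cite{FL4}'' fills this; those papers never pass $h$-limits through such integrals. The subsequent ``standard descent'' from $\mathbb{C}\setminus(\Delta_{j+1}\cup\Delta_m)$ to $\mathbb{C}\setminus\Delta_m$ is likewise not free.

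The paper avoids all of this by never forming the limit of $\mathcal{A}_{{\bf n},j}/a_{{\bf n},m}$ at the stage of the lemma (that limit is \eqref{fund2}, deduced afterwards \emph{from} \eqref{fund1} via formula (17) of \cite[Lemma 2.9]{FL4} --- the identity you quote --- i.e.\ in the opposite logical direction). Instead, for each fixed $j$ it performs $m-j-1$ successive algebraic eliminations: divide $\mathcal{A}_{{\bf n},j}$ by $\widehat{\sigma}_{j+1}$, use \eqref{s22} and \eqref{4.4} to remove $a_{{\bf n},j+1}$, apply Lemma~\ref{reduc} to the resulting form to obtain fresh orthogonality relations and a new weight with zeros on $\stackrel{\circ}{\Delta}_{j+1}$; then divide by $\langle \sigma_{j+2},\sigma_{j+1}\widehat{\rangle}$ to remove $a_{{\bf n},j+2}$, and so on, until only $a_{{\bf n},j}-(-1)^{m-j}a_{{\bf n},m}\langle\sigma_m,\ldots,\sigma_{j+1}\widehat{\rangle}$ remains over a weight $w^*_{{\bf n},j}$ of degree $|{\bf n}|-O(1)$ with zeros in $\stackrel{\circ}{\Delta}_{m-1}$. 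At that point $a_{{\bf n},j}/a_{{\bf n},m}$ is literally an incomplete diagonal multi-point Pad\'e approximant of $(-1)^{m-j}\widehat{s}_{m,j+1}$, Lemma~\ref{BusLop} applies directly on $\mathbb{C}\setminus\Delta_m$, and one further application of Lemma~\ref{reduc} yields the orthogonality $\int x^{\nu}a_{{\bf n},j}\,d\tau_{m,j+1}/w^*_{{\bf n},j}=0$ that gives the zero count. Your sketch gestures at this reversal only at the very end, but your main induction never constructs the weights $w^*_{{\bf n},j}$ on which both the convergence and the count actually rest; the inductive step should be replaced by this elimination scheme.
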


\begin{proof} If $m=1$ the statement reduces directly to Lemma \ref{BusLop}, so without loss of generality we can assume that $m \geq 2$. Fix ${\bf n} \in \Lambda$.

In Theorem \ref{unicidad} we proved that $\mathcal{A}_{{\bf n},1}$ has exactly $|{\bf n}| -1$ simple zeros in $\mathbb{C} \setminus \Delta_2$ and they all lie in  $\stackrel{\circ}{\Delta}_1 $. Therefore, there exists a polynomial
$w_{{\bf n},1}, \deg w_{{\bf n},1} = |{\bf n}| -1,$ whose zeros lie in $\stackrel{\circ}{\Delta}_1 $ such that
\begin{equation} \label{A1} \frac{\mathcal{A}_{{\bf n},1}}{w_{{\bf n},1}}  \in \mathcal{H}(\mathbb{C} \setminus  \Delta_2) \qquad \mbox{and} \qquad \frac{\mathcal{A}_{{\bf n},1}}{w_{{\bf n},1}} = \mathcal{O}\left(\frac{1}{z^{|{\bf n}| - \overline{n}_1}}\right), \qquad z\to \infty,
\end{equation}
where $\overline{n}_j = \max \{n_k: k=j,\ldots,m\}$.

From  \eqref{A1} and Lemma \ref{reduc} it follows that
\begin{equation} \label{ortog2}
\int x^{\nu} \mathcal{A}_{{\bf n},2}(x) \frac{d\sigma_2(x)}{w_{{\bf n},1}(x)} = 0, \qquad \nu = 0,\ldots,|{\bf n}| - \overline{n}_1 -2,
\end{equation}
and
\begin{equation} \label{resto2}
\frac{\mathcal{A}_{{\bf n},1}(z)}{w_{{\bf n},1}(z)} = \int \frac{ \mathcal{A}_{{\bf n},2}(x) d\sigma_2(x)}{w_{{\bf n},1 }(x)(z-x)}.
\end{equation}
In particullar,  \eqref{ortog2} implies that $\mathcal{A}_{{\bf n},2}$ has at least $|{\bf n}| - \overline{n}_1 -1$ sign changes in $\stackrel{\circ}{\Delta}_2 $. (We cannot claim that $\mathcal{A}_{{\bf n},2}$ has exactly $|{\bf n}| - \overline{n}_1 -1$ simple zeros in $\mathbb{C} \setminus \Delta_3$ and that they all lie in $\stackrel{\circ}{\Delta}_2$ except if $\overline{n}_1 = n_1$.) Therefore, there exists a polynomial  $w_{{\bf n},2}, \deg w_{{\bf n},2} = |{\bf n}| - \overline{n}_1 -1$, whose zeros lie in $\stackrel{\circ}{\Delta}_2$, such that
\[ \frac{\mathcal{A}_{{\bf n},2}}{w_{{\bf n},2}}  \in \mathcal{H}(\mathbb{C} \setminus \Delta_3) \qquad \mbox{and} \qquad \frac{\mathcal{A}_{{\bf n},2}}{w_{{\bf n},2}} = \mathcal{O}\left(\frac{1}{z^{|{\bf n}| - \overline{n}_1 - \overline{n}_2}}\right) , \quad z \to \infty.
\]

Iterating this process, using Lemma \ref{reduc} several times, on step $j,\, j \in \{1,\ldots,m\},$ we find that there exists a polynomial
$w_{{\bf n},j}, \deg w_{{\bf n},j} = |{\bf n}| -\overline{n}_1 -\cdots - \overline{n}_{j-1}- 1,$ whose zeros are points where $\mathcal{A}_{{\bf n},j}$ changes sign in $\stackrel{\circ}{\Delta}_j $  such that
\begin{equation} \label{Anj} \frac{\mathcal{A}_{{\bf n},j}}{w_{{\bf n},j}}  \in \mathcal{H}(\mathbb{C} \setminus \Delta_{j+1}) \qquad \mbox{and} \qquad \frac{\mathcal{A}_{{\bf n},j}}{w_{{\bf n},j}} = \mathcal{O}\left(\frac{1}{z^{|{\bf n}| - \overline{n}_1 - \cdots-\overline{n}_j}}\right) , \quad z \to \infty.
\end{equation}
This process concludes as soon as $|{\bf n}| - \overline{n}_1 - \cdots-\overline{n}_j \leq 0$. Since $\lim_{{\bf n} \in \Lambda} |{\bf n}| = \infty$, because of \eqref{cond1} we can always take $m$ steps for all ${\bf n} \in \Lambda$ with $|{\bf n}|$ sufficiently large. In what follows, we only consider such ${\bf n}$'s.

When $n_1 = \overline{n}_1\geq \cdots \geq n_m = \overline{n}_m$, we obtain that $\mathcal{A}_{{\bf n},m} \equiv a_{{\bf n},m}$ has $n_m -1$ sign changes in $\stackrel{\circ}{\Delta}_m $ and since $\deg a_{{\bf n},m} \leq n_m-1$ this means that $\deg a_{{\bf n},m} = n_m-1$ and all its zeros lie in $\stackrel{\circ}{\Delta}_m $. (In fact, in this case we can prove that $\mathcal{A}_{{\bf n},j}, j=1,\ldots,m$ has exactly $|{\bf n}| - n_1-\cdots -n_{j-1} -1$ zeros in $\mathbb{C} \setminus \Delta_{j+1}$ that they are all simple and lie in  $\stackrel{\circ}{\Delta}_j $, where $\Delta_{m+1} = \emptyset$, compare with \cite[Propositions 2.5, 2.7]{FLLS}.)

In general, we have that $a_{{\bf n},m}$ has at least $|{\bf n}| - \overline{n}_1 - \cdots-\overline{n}_{m-1}-1$ sign changes in $\stackrel{\circ}{\Delta}_m $; therefore, the number of zeros of $a_{{\bf n},m}$ which may lie outside of $\Delta_m$ is bounded by
\[ \deg a_{{\bf n},m} - (|{\bf n}| - \overline{n}_1 - \cdots-\overline{n}_{m-1}-1) \leq \sum_{k=1}^{m-1} \overline{n}_k -n_k \leq (m-1)C,\]
where $C$ is the constant given in \eqref{cond1}, which does not depend on ${\bf n} \in \Lambda$.

For $j=m-1$ we have that there exists $w_{{\bf n},m-1}, \deg w_{{\bf n},m-1} = |{\bf n}| - \overline{n}_{1} - \cdots - \overline{n}_{m-2} -1,$ whose zeros lie in $\stackrel{\circ}{\Delta}_{m-1}$ such that
\[ \frac{\mathcal{A}_{{\bf n},m-1}}{w_{{\bf n},m-1}} = \frac{a_{{\bf n},m-1} + a_{{\bf n},m} \widehat{\sigma}_m}{w_{{\bf n},m-1}} \in \mathcal{H}(\mathbb{C} \setminus \Delta_m)\quad \mbox{and} \quad  \frac{\mathcal{A}_{{\bf n},m-1}}{w_{{\bf n},m-1}} = \mathcal{O}\left(\frac{1}{z^{|{\bf n}| - \overline{n}_1 - \cdots-\overline{n}_{m-1}}}\right) ,\,\,  z \to \infty,
\]
where $\deg  a_{{\bf n},m-1} \leq n_{m-1} -1, \deg  a_{{\bf n},m} \leq n_{m} -1$. Thus, using \eqref{cond1} it is easy to check that  $(a_{{\bf n},m-1}/a_{{\bf n},m})_{ n \in \Lambda}$ forms a sequence of  incomplete diagonal multi-point Pad\'e approximants of $-\widehat{\sigma}_m$ satisfying a)-b) with appropriate values of $n,\kappa$ and $\ell$. Due to Lemma \ref{BusLop} it follows that
\[ h-\lim_{n\in \Lambda} \frac{a_{{\bf n},m-1}}{a_{{\bf n},m}} = - \widehat{\sigma}_m, \qquad \mbox{inside} \qquad \mathbb{C} \setminus \Delta_m.
\]
Dividing by $\widehat{\sigma}_m$ and using \eqref{s22}, we also have
\[ \frac{\mathcal{A}_{{\bf n},m-1}}{\widehat{\sigma}_m w_{{\bf n},m-1}} = \frac{a_{{\bf n},m-1} \widehat {\tau}_m + b_{{\bf n},m-1}}{w_{{\bf n},m-1}} \in \mathcal{H}(\mathbb{C} \setminus \Delta_m),
\]
where $b_{{\bf n},m-1} = a_{{\bf n},m} + \ell_m a_{{\bf n},m-1}$ and
\[  \frac{\mathcal{A}_{{\bf n},m-1}}{\widehat{\sigma}_m w_{{\bf n},m-1}} = \mathcal{O}\left(\frac{1}{z^{|{\bf n}| - \overline{n}_1 - \cdots-\overline{n}_{m-1}-1}}\right) ,\,\,  z \to \infty.
\]
Consequently, $(b_{{\bf n},m-1}/a_{{\bf n},m-1})_{ n \in \Lambda}$ forms a sequence of  incomplete diagonal multi-point Pad\'e approximants of $-\widehat{\tau}_m$ satisfying a)-b) with appropriate values of $n,\kappa$ and $\ell$. Then Lemma  \ref{BusLop} and Corollary  \ref{momentosNik} imply that
\[ h-\lim_{n\in \Lambda} \frac{b_{{\bf n},m-1}}{a_{{\bf n},m-1}} = - \widehat{\tau}_m, \qquad \mbox{inside} \qquad \mathbb{C} \setminus \Delta_m,
\]
or, equivalently,
\[ h-\lim_{n\in \Lambda} \frac{a_{{\bf n},m}}{a_{{\bf n},m-1}} = - \widehat{\sigma}_m^{-1}, \qquad \mbox{inside} \qquad \mathbb{C} \setminus \Delta_m.
\]
We have proved \eqref{convHaus} for $j=m-1$.

For $j= m-2$, we have shown that there exists a polynomial
$w_{{\bf n},m-2}, \deg w_{{\bf n},m-2} = |{\bf n}| -\overline{n}_1 - \cdots \overline{n}_{m-3}- 1,$ whose zeros lie in $\stackrel{\circ}{\Delta}_{m-2} $ such that
\[ \frac{\mathcal{A}_{{\bf n},m-2}}{w_{{\bf n},m-2}} =  \frac{a_{{\bf n},m-2} + a_{{\bf n},m-1} \widehat{\sigma}_{m-1} + a_{{\bf n},m} \langle \sigma_{m-1},\sigma_m\widehat{\rangle}}{w_{{\bf n},m-2}} \in \mathcal{H}(\mathbb{C} \setminus \Delta_{m-1})
\]
and
\[  \frac{\mathcal{A}_{{\bf n},m-2}}{w_{{\bf n},m-2}} = \mathcal{O}\left(\frac{1}{z^{|{\bf n}| - \overline{n}_1 - \cdots-\overline{n}_{m-2}}}\right), \qquad z \to \infty.
\]
However, using \eqref{s22} and \eqref{4.4}, we obtain
\[\frac{a_{{\bf n},m-2} + a_{{\bf n},m-1} \widehat{\sigma}_{m-1} + a_{{\bf n},m} \langle \sigma_{m-1},\sigma_m\widehat{\rangle}}{\widehat{\sigma}_{m-1}} =
\]
\[
(\ell_{m-1} a_{{\bf n},m-2}+ a_{{\bf n},m-1} + C_1 a_{{\bf n},m})+ a_{{\bf n},m-2}\widehat{\tau}_{m-1}     - a_{{\bf n},m} \langle  {\tau}_{m-1}, \langle \sigma_{m}, \sigma_{m-1} \rangle \widehat{\rangle},
\]
where $\deg \ell_{m-1} = 1$ and $C_1$ is a constant. Consequently, $ {\mathcal{A}_{{\bf n},m-2}}/{\widehat{\sigma}_{m-1} } $ adopts the form of $\mathcal{A}$ in Lemma \ref{reduc}, $ {\mathcal{A}_{{\bf n},m-2}}/({\widehat{\sigma}_{m-1}w_{{\bf n},m-2} }) \in \mathcal{H}(\mathbb{C} \setminus \Delta_{m-1})$,   and
\begin{equation} \label{Anm} \frac{\mathcal{A}_{{\bf n},m-2}}{\widehat{\sigma}_{m-1}w_{{\bf n},m-2}} = \mathcal{O}\left(\frac{1}{z^{|{\bf n}| - \overline{n}_1 - \cdots-\overline{n}_{m-2} -1}}\right), \qquad z \to \infty.
\end{equation}
From \eqref{eq:4} in Lemma \ref{reduc} it follows that for $\nu = 0,\ldots, |{\bf n}| - \overline{n}_1 - \cdots-\overline{n}_{m-2} -3$
\[ \int x^{\nu}  \left({a_{{\bf n},m-2}(x)     - a_{{\bf n},m}(x)   \langle \sigma_{m}, \sigma_{m-1} \widehat{\rangle}(x)} \right) \frac{d  \tau_{m-1}(x)}{w_{{\bf n},m-2}(x)} = 0.
\]

Therefore, $ a_{{\bf n},m-2}- a_{{\bf n},m} \langle \sigma_{m}, \sigma_{m-1} \widehat{\rangle} \in \mathcal{H}(\mathbb{C} \setminus \Delta_m)$ must have at least $|{\bf n}| - \overline{n}_1 - \cdots-\overline{n}_{m-2} -2$ sign changes in $\stackrel{\circ}{\Delta}_{m-1}$. This means that there exists a polynomial $w_{{\bf n},m-2}^*, \deg w_{{\bf n},m-2}^* = |{\bf n}| - \overline{n}_1 - \cdots-\overline{n}_{m-2} -2,$ whose zeros are simple and lie in $\stackrel{\circ}{\Delta}_{m-1}$ such that
\[ \frac{a_{{\bf n},m-2}- a_{{\bf n},m} \langle \sigma_{m}, \sigma_{m-1} \widehat{\rangle}}{w_{{\bf n},m-2}^*} \in \mathcal{H}(\mathbb{C} \setminus \Delta_m)
\]
and
\[ \frac{a_{{\bf n},m-2}- a_{{\bf n},m} \langle \sigma_{m}, \sigma_{m-1} \widehat{\rangle}}{w_{{\bf n},m-2}^*}  = \mathcal{O}\left(\frac{1}{z^{{|{\bf n}| - \overline{n}_1 - \cdots-\overline{n}_{m-3}-2\overline{n}_{m-2}-1}}}\right), \qquad  z\to \infty.
\]
Due to \eqref{cond1}, this implies that $(a_{{\bf n},m-2}/a_{{\bf n},m}), n \in \Lambda,$ is a sequence of incomplete diagonal Pad\'e approximants of $\langle \sigma_{m}, \sigma_{m-1} \widehat{\rangle}$. By Lemma  \ref{BusLop} and Corollary \ref{momentosNik} we obtain its convergence in Hausdorff content to $\langle \sigma_{m}, \sigma_{m-1} \widehat{\rangle}$. To prove the other part in \eqref{convHaus}, we divide by $\langle \sigma_m,\sigma_{m-1}\widehat{\rangle}(z)$ use \eqref{s22} and proceed as we did in the case $j=m-1$.

Let us  prove \eqref{convHaus} in general. Fix $j \in \{0,\ldots,m-3\}$ (for $j=m-2,m-1$ it's been proved). Having in mind \eqref{Anj} we need to reduce $\mathcal{A}_{{\bf n},j}$ so as to eliminate all $a_{{\bf n},k}, k=j+1,\ldots,m-1$. We start out eliminating $a_{{\bf n},j+1}$. Consider the ratio $\mathcal{A}_{{\bf n},j}/\widehat{\sigma}_{j+1}$. Using \eqref{s22} and \eqref{4.4} we obtain that
\[ \frac{\mathcal{A}_{{\bf n},j}}{\widehat{\sigma}_{j+1}} =    \left(\ell_{j+1} a_{{\bf n},j}+ \sum_{k=j+1}^m \frac{|s_{j+1,k}|}{|\sigma_{j+1}|} a_{{\bf n},j+1} \right) + a_{{\bf n},j}\widehat{\tau}_{j+1} - \sum_{k=j+2}^m a_{{\bf n},k} \langle  {\tau}_{j+1}, \langle s_{j+2,k}, \sigma_{j+1} \rangle \widehat{\rangle},
\]
has the form of $\mathcal{A}$ in Lemma \ref{reduc}, where $ {\mathcal{A}_{{\bf n},j}}/({\widehat{\sigma}_{j+1}w_{{\bf n},j}}) \in \mathcal{H}(\mathbb{C} \setminus \Delta_{j+1})$, and
\[\frac{\mathcal{A}_{{\bf n},j}}{\widehat{\sigma}_{j+1}w_{{\bf n},j}} \in \mathcal{O}\left(\frac{1}{z^{|{\bf n}| - \overline{n}_1 - \cdots-\overline{n}_{j} -1}}\right), \qquad z \to \infty.
\]
From \eqref{eq:4} of Lemma \ref{reduc}, we obtain that for $\nu = 0,\ldots,|{\bf n}| - \overline{n}_1 - \cdots-\overline{n}_{j} -3$
\[ 0 = \int x^{\nu} \left( a_{{\bf n},j}(x) - \sum_{k=j+2}^m a_{{\bf n},k}  \langle s_{j+2,k}, \sigma_{j+1}  \widehat{\rangle}(x) \right)\frac{d\tau_{j+1}(x)}{w_{{\bf n},j}(x)}
\]
which implies that the function in parenthesis under the integral sign has at least $|{\bf n}| - \overline{n}_1 - \cdots-\overline{n}_{j} -2$ sign changes in $\stackrel{\circ}{\Delta}_{j+1} $. In turn, it follows that there exists  a polynomial $\widetilde{w}_{{\bf n},j }, \deg \widetilde{w}_{{\bf n},j } = |{\bf n}| - \overline{n}_1 - \cdots-\overline{n}_{j} -2$, whose zeros are simple and lie in $\stackrel{\circ}{\Delta}_{j+1} $ such that
\[ \frac{a_{{\bf n},j}  - \sum_{k=j+2}^m a_{{\bf n},k}  \langle s_{j+2,k}, \sigma_{j+1}  \widehat{\rangle} }{\widetilde{w}_{{\bf n},j }} \in \mathcal{H}(\mathbb{C} \setminus \Delta_{j+2})
\]
and
\[ \frac{a_{{\bf n},j}  - \sum_{k=j+2}^m a_{{\bf n},k}  \langle s_{j+2,k}, \sigma_{j+1}  \widehat{\rangle} }{\widetilde{w}_{{\bf n},j }} = \mathcal{O}\left(\frac{1}{z^{|{\bf n}| - \overline{n}_1 - \cdots-\overline{n}_{j-1}-2\overline{n}_{j} -1}}\right), \qquad z \to \infty.
\]
Notice that $a_{{\bf n},j+1}$ has been eliminated and that
\[ \langle s_{j+2,k}, \sigma_{j+1}   {\rangle} = \langle \langle \sigma_{j+2},\sigma_{j+1}\rangle, \sigma_{j+3}, \ldots,\sigma_k \rangle, \qquad k = j+3,\ldots,m.
\]

Now we must do away with $a_{{\bf n},j+2}$ in  $a_{{\bf n},j}  - \sum_{k=j+2}^m a_{{\bf n},k}  \langle s_{j+2,k}, \sigma_{j+1}  \widehat{\rangle}$ (in case that $j+2 < m$). To this end, we consider the ratio
\[ \frac{a_{{\bf n},j}  - \sum_{k=j+2}^m a_{{\bf n},k}  \langle s_{j+2,k}, \sigma_{j+1}  \widehat{\rangle} }{\langle \sigma_{j+2},\sigma_{j+1}\widehat{\rangle}}
\]
and repeat the arguments employed above with $\mathcal{A}_{{\bf n},j}$. After $m-j-2$ reductions obtained applying consecutively Lemma \ref{reduc}, we find that there exists a polynomial which we denote $w_{{\bf n},j}^*, \deg w_{{\bf n},j}^* = |{\bf n}| - \overline{n}_1 - \cdots-\overline{n}_{j-1}-(m-j-1)\overline{n}_{j} -2$, whose zeros are simple and lie in $\stackrel{\circ}{\Delta}_{m-1} $ such that
\[ \frac{a_{{\bf n},j} - (-1)^{m-j} a_{{\bf n},m} \langle \sigma_{m},\ldots, \sigma_{j+1} \widehat{\rangle}}{w_{{\bf n},j}^*} \in \mathcal{H}(\mathbb{C} \setminus \Delta_m)
\]
and
\[ \frac{a_{{\bf n},j}- (-1)^{m-j} a_{{\bf n},m} \langle \sigma_{m},\ldots, \sigma_{j+1} \widehat{\rangle}}{w_{{\bf n},j}^*}  = \mathcal{O}\left(\frac{1}{z^{{|{\bf n}| - \overline{n}_1 - \cdots-\overline{n}_{j-1}-(m-j)\overline{n}_{j}-1}}}\right), \qquad z\to \infty.
\]
Dividing by $(-1)^{m-j}  \langle \sigma_{m},\ldots, \sigma_{j+1} \widehat{\rangle},$ from here we also get that
\[ \frac{a_{{\bf n},j}(-1)^{m-j}\langle \sigma_{m},\ldots, \sigma_{j+1} \widehat{\rangle}^{-1} -  a_{{\bf n},m} }{w_{{\bf n},j}^*} \in \mathcal{H}(\mathbb{C} \setminus \Delta_m)
\]
and
\[ \frac{a_{{\bf n},j}(-1)^{m-j}\langle \sigma_{m},\ldots, \sigma_{j+1} \widehat{\rangle}^{-1}- a_{{\bf n},m} }{w_{{\bf n},j}^*}  = \mathcal{O}\left(\frac{1}{z^{{|{\bf n}| - \overline{n}_1 - \cdots-\overline{n}_{j-1}-(m-j)\overline{n}_{j}-2}}}\right), \qquad z\to \infty.
\]
On account of \eqref{cond1}, these relations imply that $(a_{{\bf n},j}/a_{{\bf n},m}), {\bf n} \in \Lambda,$ is a sequence of incomplete diagonal multi-point Pad\'e approximants of
$(-1)^{m-j}\langle \sigma_{m},\ldots, \sigma_{j+1} \widehat{\rangle}$ and $(a_{{\bf n},m}/a_{{\bf n},j}), {\bf n} \in \Lambda,$ is a sequence of incomplete diagonal multi-point Pad\'e approximants of $(-1)^{m-j}\langle \sigma_{m},\ldots, \sigma_{j+1} \widehat{\rangle}^{-1}$. Since $\langle \sigma_{m},\ldots, \sigma_{j+1} \widehat{\rangle}^{-1} = \widehat{\tau}_{m,j+1} + \ell_{m,j+1},$ from Lemma   \ref{BusLop} and Corollary \ref{momentosNik} we obtain \eqref{convHaus}.

Going one step further using Lemma \ref{reduc}, we also obtain that
\[ 0 = \int x^{\nu} a_{{\bf n},j}(x)  \frac{d \tau_{m,j+1}}{w_{{\bf n},j}^*(x)},\qquad \nu=0,\ldots,|{\bf n}| - \overline{n}_1 - \cdots-\overline{n}_{j-1}-(m-j)\overline{n}_{j}-4
\]
which implies that $a_{{\bf n},j}$ has at least $|{\bf n}| - \overline{n}_1 - \cdots-\overline{n}_{j-1}-(m-j)\overline{n}_{j}-3$ sign changes in $\stackrel{\circ}{\Delta}_{m} $. From \eqref{cond1} we obtain that there exists a non-negative constant $C_1$, independent of $n\in \Lambda$, such that the number of zeros of $a_{{\bf n},j}, j=0,\ldots,m,$ in $\Delta_m$ is bounded from below by $(|{\bf n}|/m) - C_1$. This settles the last statement.
\end{proof}

In the case of decreasing components in $\bf n$, we saw that all the zeros of $a_{{\bf n},m}$ lie in $\Delta_m$ and \cite[Lemma 1]{gon} would allow us to derive immediately uniform convergence on each compact subset of $\mathbb{C} \setminus \Delta_m$ from the convergence in Hausdorff content. For other configurations of the components we have to work a little harder.

\begin{proof}[Proof of Theorem \ref{converge}] Let $\overline{\jmath}$ be the last component of $(n_0,\ldots,n_m)$ such that $n_{\overline{\jmath}} = \min_{j=0,\ldots,m} (n_j)$.  Let us prove that $\deg a_{{\bf n},\overline{\jmath}} = n_{\overline{\jmath}} -1$, that all its zeros are simple and lie in $\stackrel{\circ}{\Delta}_{m} $.

From  \cite[Theorem 3.2]{FL4} (see also \cite[Theorem 1.3]{FL3}) we know that  there exists a permutation $\lambda$ of $(0,\ldots,m)$ which reorders the components of $(n_0,n_1,\ldots,n_m)$ decreasingly, $n_{\lambda(0)} \geq \cdots \geq n_{\lambda(m)},$ and an associated Nikishin system $(r_{1,1},\ldots,r_{1,m}) = {\mathcal{N}}(\rho_{1},\ldots,\rho_m)$ such that
\[  \mathcal{A}_{{\bf n},0} = (q_{{\bf n},0} + \sum_{k=1}^m q_{{\bf n},k} \widehat{r}_{1,k})\widehat{s}_{1,\lambda(0)}, \qquad \deg q_{{\bf n},k} \leq n_{\lambda(k)} -1, \qquad k=0,\ldots,m.
\]
The permutation may be taken so that for all $0 \leq j <k \leq n$ with $n_j = n_k$ then also $\lambda(j) < \lambda(k)$. In this case,  see formulas (31) in the proof of \cite[Lemma 2.3]{FL3}, it follows that $q_{{\bf n},m} = \pm a_{{\bf n},\overline{\jmath}}$. Reasoning with $q_{{\bf n},0} + \sum_{k=1}^m q_{{\bf n},k} \widehat{r}_{1,k}$  as we did with $\mathcal{A}_{{\bf n},0}$ we obtain that
$\deg q_{{\bf n},m} = n_{\lambda(m)} -1$ and that its zeros are all simple and lie in $\stackrel{\circ}{\Delta}_{m} $. However, $n_{\lambda(m)} = n_{\overline{\jmath}}$ and $q_{{\bf n},m} = \pm a_{{\bf n},\overline{\jmath}}$ so the statement holds.

The index $\overline{\jmath}$ as defined above may depend on the multi-index ${\bf n} \in \Lambda$. Given $\overline{\jmath} \in \{0,\ldots,m\}$, let us denote by $\Lambda(\overline{\jmath})$ the set of all ${\bf n} \in \Lambda$ such that $\overline{\jmath}$ is the last component of $(n_0,\ldots,n_m)$ such that $n_{\overline{\jmath}} = \min_{j=0,\ldots,m} (n_j)$. Fix $\overline{\jmath}$ and suppose that $\Lambda(\overline{\jmath})$ contains infinitely many multi-indices. If $\overline{\jmath} = m$, then  \cite[Lemma 1]{gon} and the first limit in \eqref{convHaus} imply  that
\[ \lim_{n\in \Lambda(m)}\frac{a_{{\bf n}, j}}{a_{{\bf n},m}} = (-1)^{m-j}\widehat{s}_{m,j+1}, \qquad j=0,\ldots,m-1,
\]
uniformly on each compact subset of $\mathbb{C}\setminus \Delta_m$, as needed.

Assume that $\overline{\jmath} \in \{0,\ldots,m-1\}$. Since all the zeros of $a_{{\bf n},\overline{\jmath}}$   lie in $\stackrel{\circ}{\Delta}_m,$ using \cite[Lemma 1]{gon} and the second limit in \eqref{convHaus} for $j = \overline{\jmath}$, we obtain that
\begin{equation} \label{convinv} \lim_{{\bf n} \in \Lambda(\overline{\jmath})} \frac{a_{{\bf n},m}}{a_{{\bf n},\overline{\jmath}}} = \frac{1}{(-1)^{m-\overline{\j}}\widehat{s}_{m,\overline{\jmath} +1}},
\end{equation}
uniformly on each compact subset of $\mathbb{C}\setminus \Delta_m$.  The function on the right hand side of \eqref{convinv} is holomorphic and never zero on $\mathbb{C} \setminus \Delta_m$ and the approximating functions are holomorphic on $\mathbb{C} \setminus \Delta_m$. Using Rouche's theorem it readily follows  that on any compact subset  $\mathcal{K} \subset \mathbb{C} \setminus \Delta_m$ for all sufficiently large $|{\bf n}|, n \in \Lambda(\overline{\jmath}),$ the polynomials $a_{{\bf n},m} $ have no zero on $\mathcal{K}$. This is true for any $\overline{\jmath} \in \{0,\ldots,m\}$ such that $\Lambda(\overline{\jmath})$ contains infinitely many multi-indices. Therefore, the only accumulation points of the zeros of the polynomials ${a_{{\bf n},m}}$ are   in $\Delta_m \cup \{\infty\}$.

Hence, on any  bounded region $D$ such that $\overline{D} \subset \mathbb{C} \setminus \Delta_m$ for each fixed $j=0,\ldots,m-1,$ and all sufficiently large $|{\bf n}|, {\bf n} \in \Lambda$, we have that $ {a_{{\bf n},j}}/{a_{{\bf n},m}} \in \mathcal{H}(D)$. From \cite[Lemma 1]{gon} and the first part of \eqref{convHaus} it follows that
\begin{equation} \label{convunifgen} \lim_{{\bf n} \in \Lambda} \frac{a_{{\bf n},j}}{a_{{\bf n},m}} = (-1)^{m-j}\widehat{s}_{m,j+1}, \qquad j=0,\ldots,m-1,
\end{equation}
uniformly on each compact subset of $D$. Since $D$ was chosen arbitrarily, as long as $\overline{D} \subset \mathbb{C} \setminus \Delta_m$, it follows that the convergence is uniform on each compact subset of $\mathbb{C} \setminus \Delta_m$ and we have \eqref{fund1}. Since the right hand of \eqref{fund1} is a function which does not vanish in $\overline{D} \subset \mathbb{C} \setminus \Delta_m$, Rouche's theorem implies that for each $j=0,\ldots,m-1$ the accumulation points of the zeros of the polynomials $a_{{\bf n},j}$ must be in $\Delta_m \cup \{\infty\}$ as claimed. (For $j=m$ this was proved above.)

Now,
\[ \frac{\mathcal{A}_{{\bf n},j}}{a_{{\bf n},m}} = \frac{a_{{\bf n},j}}{a_{{\bf n},m}} + \sum_{k=j+1}^{m-1} \frac{a_{{\bf n},k}}{a_{{\bf n},m}} \widehat{s}_{j+1,k} + \widehat{s}_{j+1,m}.
\]
According to formula (17) in \cite[Lemma 2.9]{FL4}
\[ 0 \equiv (-1)^{m-j }\widehat{s}_{m,j+1}  + \sum_{k=j+1}^{m-1}(-1)^{m-k}   \widehat{s}_{m,k+1} \widehat{s}_{j+1,k} +
  \widehat{s}_{j+1, m}, \quad z \in \mathbb{C} \setminus (\Delta_{j+1} \cup \Delta_m).
\]
Deleting one expression from the other we have that
\begin{equation} \label{difer} \frac{\mathcal{A}_{{\bf n},j}}{a_{{\bf n},m}} = \left(\frac{a_{{\bf n},j}}{a_{{\bf n},m}} - (-1)^{m-j }\widehat{s}_{m,j+1}\right) + \sum_{k=j+1}^{m-1} \left(\frac{a_{{\bf n},k}}{a_{{\bf n},m}} - (-1)^{m-k}   \widehat{s}_{m,k+1} \right) \widehat{s}_{j+1,k}
\end{equation}
Consequently, for each $j=0,\ldots,m-1$, from \eqref{fund1} we obtain \eqref{fund2}.
\end{proof}

Suppose that $\Delta_m$ is bounded. Let $\Gamma$ be a positively oriented closed simple Jordan curve that surrounds $\Delta_m$. Define $\kappa_{{\bf n},j}(\Gamma), j=0,\ldots,m$ to be  the number of zeros of ${a_{{\bf n},j}}$ outside $\Gamma$. As above, given $\overline{\jmath} \in \{0,\ldots,m\}$, let us denote by $\Lambda(\overline{\jmath})$ the set of all ${\bf n} \in \Lambda$ such that $\overline{\jmath}$ is the last component of $(n_0,\ldots,n_m)$ which satisfies $n_{\overline{\jmath}} = \min_{j=0,\ldots,m} (n_j)$.

\begin{corollary} \label{ceros} Suppose that the assumptions of Theorem \ref{converge} hold and $\Delta_m$ is bounded. Then
for all sufficiently large $|{\bf n}|, {\bf n} \in \Lambda(\overline{\jmath}),$
\begin{equation} \label{kappaj} \kappa_{{\bf n},j}(\Gamma) = \left\{
\begin{array}{ll}
n_j - n_{\overline{\jmath}}\,\,, & j=0,\ldots,m-1, \\
n_m - n_{\overline{\jmath}} -1, & j=m.
\end{array}
\right.
\end{equation}
The rest of the zeros of the polynomials $a_{{\bf n},j}$ accumulate (or lie) on $ {\Delta_{m}} $.
\end{corollary}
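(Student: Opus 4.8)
The plan is to count the zeros of each $a_{{\bf n},j}$ that fall inside $\Gamma$ by means of the argument principle, feeding in the uniform convergence of Theorem~\ref{converge} together with one fact extracted from its proof. First note that, by Theorem~\ref{unicidad}, $\deg a_{{\bf n},j}=n_j-1$ for $j=0,\ldots,m$, and, by Theorem~\ref{converge}, the accumulation points of the zeros of the polynomials $a_{{\bf n},j}$, ${\bf n}\in\Lambda$, are contained in $\Delta_m\cup\{\infty\}$. Since $\Gamma$ is a compact subset of $\mathbb{C}\setminus\Delta_m$ not containing $\infty$, for all sufficiently large $|{\bf n}|$, ${\bf n}\in\Lambda$, none of the $a_{{\bf n},j}$ has a zero on $\Gamma$ nor on a fixed neighbourhood of it; hence $\kappa_{{\bf n},j}(\Gamma)$ is well defined and
\[ \kappa_{{\bf n},j}(\Gamma)=(n_j-1)-Z_{{\bf n},j}(\Gamma),\qquad j=0,\ldots,m, \]
where $Z_{{\bf n},j}(\Gamma)$ denotes the number of zeros of $a_{{\bf n},j}$, counted with multiplicity, inside the bounded component of $\mathbb{C}\setminus\Gamma$. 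Thus it suffices to determine the $Z_{{\bf n},j}(\Gamma)$.

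The anchor is the index $\overline{\jmath}$: in the proof of Theorem~\ref{converge} it was shown that $a_{{\bf n},\overline{\jmath}}$ has exactly $n_{\overline{\jmath}}-1$ zeros, all of them simple and lying in $\stackrel{\circ}{\Delta}_m$, hence inside $\Gamma$; thus $Z_{{\bf n},\overline{\jmath}}(\Gamma)=n_{\overline{\jmath}}-1$. For the remaining indices, fix $j\in\{0,\ldots,m-1\}$ and apply the argument principle to the meromorphic function $a_{{\bf n},j}/a_{{\bf n},m}$ on the bounded component of $\mathbb{C}\setminus\Gamma$, which gives $Z_{{\bf n},j}(\Gamma)-Z_{{\bf n},m}(\Gamma)=\frac{1}{2\pi i}\oint_{\Gamma}d\log(a_{{\bf n},j}/a_{{\bf n},m})$. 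By \eqref{fund1}, $a_{{\bf n},j}/a_{{\bf n},m}\to(-1)^{m-j}\widehat{s}_{m,j+1}$ uniformly on a neighbourhood of $\Gamma$, and the limit is holomorphic and zero free on $\mathbb{C}\setminus\Delta_m$ with a simple zero at $\infty$; indeed $\widehat{s}_{m,j+1}(z)=s_{m,j+1}(\mathbb{R})/z+\mathcal{O}(1/z^2)$ with $s_{m,j+1}(\mathbb{R})\neq0$, because the generating measures, and hence $s_{m,j+1}$, have constant sign, and Cauchy transforms of measures of constant sign do not vanish off the convex hull of their support. Consequently
\[ Z_{{\bf n},j}(\Gamma)-Z_{{\bf n},m}(\Gamma)\ \longrightarrow\ \frac{1}{2\pi i}\oint_{\Gamma}d\log\widehat{s}_{m,j+1}=-1, \]
the value $-1$ being read off the argument principle on the unbounded component of $\mathbb{C}\setminus\Gamma$, where $\widehat{s}_{m,j+1}$ has one zero (simple, at $\infty$) and no pole. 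Since the left-hand side is an integer, it equals $-1$ for all large $|{\bf n}|$.

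Taking $j=\overline{\jmath}$ in the last relation (when $\overline{\jmath}\le m-1$) and using the anchor gives $Z_{{\bf n},m}(\Gamma)=n_{\overline{\jmath}}$, whence $Z_{{\bf n},j}(\Gamma)=n_{\overline{\jmath}}-1$ for every $j=0,\ldots,m-1$; substituting into the formula $\kappa_{{\bf n},j}(\Gamma)=(n_j-1)-Z_{{\bf n},j}(\Gamma)$ yields \eqref{kappaj}. The extremal case $\overline{\jmath}=m$ is handled separately, directly from the anchor (which now bounds $Z_{{\bf n},m}(\Gamma)$ itself). Finally, the $Z_{{\bf n},j}(\Gamma)$ zeros of $a_{{\bf n},j}$ lying inside $\Gamma$ accumulate on $\Delta_m$: the whole argument applies verbatim to any positively oriented simple closed Jordan curve surrounding $\Delta_m$, in particular to one contained in an arbitrarily small neighbourhood of $\Delta_m$, and produces the same values of $\kappa_{{\bf n},j}(\Gamma)$; hence for large $|{\bf n}|$ all but exactly $\kappa_{{\bf n},j}(\Gamma)$ of the zeros of $a_{{\bf n},j}$ lie in that neighbourhood.

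I expect the delicate point to be the bookkeeping at $\infty$: the simple zero of $\widehat{s}_{m,j+1}$ at infinity is precisely what generates the extra $-1$ distinguishing the case $j=m$ from $j<m$ in \eqref{kappaj}, so one must apply the argument principle on the correct (bounded) component of $\mathbb{C}\setminus\Gamma$ and keep careful track of the order of vanishing at infinity of the limiting Cauchy transform, which ultimately rests on $s_{m,j+1}$ having nonzero total mass. One must also justify that $\Gamma$ can be chosen free of the zeros of all the $a_{{\bf n},j}$ for large $|{\bf n}|$, which is exactly where the localisation of the zeros established in Theorem~\ref{converge} is used.
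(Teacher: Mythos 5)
Your proposal is correct and follows essentially the same route as the paper: the argument principle applied to the ratios $a_{{\bf n},j}/a_{{\bf n},m}$ over $\Gamma$, anchored by the fact (from the proof of Theorem \ref{converge}) that all $n_{\overline{\jmath}}-1$ zeros of $a_{{\bf n},\overline{\jmath}}$ lie in $\stackrel{\circ}{\Delta}_m$, with the extra unit coming from the simple zero of $\widehat{s}_{m,j+1}$ at $\infty$. The only cosmetic difference is that you count zeros inside $\Gamma$ and recover $\kappa_{{\bf n},m}(\Gamma)$ from the case $j=\overline{\jmath}$ of the same ratio, whereas the paper counts outside $\Gamma$ and starts from the reciprocal $a_{{\bf n},m}/a_{{\bf n},\overline{\jmath}}$; the bookkeeping is equivalent.
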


\begin{proof}    Fix $\overline{\jmath} \in \{0,\ldots,m-1\}$. Assume that $\Lambda(\overline{\jmath})$ contains infinitely many multi-indices. Using the argument principle and \eqref{convinv} it follows that
\[ \lim_{{\bf n}\in \Lambda(\overline{\jmath})} \frac{1}{2\pi i} \int_{\Gamma} \frac{({a_{{\bf n},m}}/{a_{{\bf n},\overline{\jmath}}})^{\prime}(z)}{({a_{{\bf n},m}}/{a_{{\bf n},\overline{\jmath}}})(z)} d z =  \frac{1}{2\pi i} \int_{\Gamma} \frac{(1/\widehat{s}_{m,\overline{\jmath} +1})^{\prime}(z)}{(1/\widehat{s}_{m,\overline{\jmath} +1})(z)} d z = 1,
\]
because $1/\widehat{s}_{m,\overline{\jmath} +1}$ has one pole and no zeros outside $\Gamma$ (counting the point $\infty$).   Recall that $\deg a_{{\bf n},j} = n_j -1, j=0,\ldots,m$ and that all the zeros of $a_{{\bf n},\overline{\jmath}}$ lie on $\Delta_m$. Then,  for all sufficiently large $|{\bf n}|, {\bf n} \in \Lambda(\overline{\jmath}),$
\[ (n_m -1) - (n_{\overline{\jmath}} -1) -  \kappa_{{\bf n},m}(\Gamma) = 1.
\]
Consequently,
\begin{equation} \label{kappam} \kappa_{{\bf n},m}(\Gamma) = n_m - n_{\overline{\jmath}} -1, \qquad {\bf n} \in \Lambda(\overline{\jmath}).
\end{equation}
Analogously, from \eqref{convunifgen}, for $j=0,\ldots,m-1$, we obtain
\[ \lim_{{\bf n}\in \Lambda} \frac{1}{2\pi i} \int_{\Gamma} \frac{({a_{{\bf n},j}}/{a_{{\bf n}, m}})^{\prime}(z)}{({a_{{\bf n},j}}/{a_{{\bf n},m}})(z)} d z =  \frac{1}{2\pi i} \int_{\Gamma} \frac{ \widehat{s}_{m, {j} +1}^{\prime}(z)}{ \widehat{s}_{m, {j} +1} (z)} d z = -1.
\]
Therefore, for all sufficiently large $|{\bf n}|, {\bf n} \in \Lambda,$
\[ n_j - n_m  + \kappa_{{\bf n},m}(\Gamma) - \kappa_{{\bf n},j}(\Gamma) = -1, \qquad j=0,\ldots,m-1,
\]
which together with \eqref{kappam} gives \eqref{kappaj}.
The last statement follows from the fact that the only accumulation points of the zeros of the $a_{{\bf n},j}$ are in $\Delta_m \cup \{\infty\}$.
\end{proof}

\begin{remark} The thesis of Theorem \ref{converge} remains valid if in place of  \eqref{cond1} we require that
\begin{equation} \label{cond2} n_j  =  \frac{|{\bf n}|}{m}  + o(|{\bf n}|),\qquad   |{\bf n}| \to \infty ,  \qquad j=1,\ldots,m.
\end{equation}
To prove this we need an improved version of Lemma \ref{BusLop} in which  the parameter $\ell$ in b)  depends on $n$ but $\ell(n) = o(n), n \to \infty$. The proof of Lemma 2 in \cite{BL} admits this variation with some additional technical difficulties in part resolved in the proof of \cite[Corollary 1]{FL2}.
\end{remark}

\begin{remark} If either $\Delta_m$ or $\Delta_{m-1}$ is a compact set and $\Delta_{m-1} \cap \Delta_{m} = \emptyset$, it not difficult to show that convergence takes place in \eqref{fund1} and \eqref{fund2} with geometric rate. More precisely, for $j=0,\ldots,m-1,$ and $\mathcal{K} \subset \mathbb{C} \setminus \Delta_m$, we have
\begin{equation} \label{asin1} \limsup_{{\bf n} \in \Lambda} \left\|\frac{a_{{\bf n}, j}}{a_{{\bf n},m}} - (-1)^{m-j}\widehat{s}_{m,j+1} \right\|_{\mathcal{K}}^{1/|{\bf n}|} = \delta_j < 1.
\end{equation}
For $j=0,\ldots,m-1,$ and $\mathcal{K} \subset \mathbb{C} \setminus (\Delta_{j+1} \cup \Delta_m)$
\begin{equation} \label{asin2} \limsup_{{\bf n} \in \Lambda} \left\|\frac{\mathcal{A}_{{\bf n},j}}{a_{{\bf n},m}}\right\|_{\mathcal{K}}^{1/|{\bf n}|} \leq \max\{\delta_k:j \leq k \leq m-1\} < 1.
\end{equation}
The second relation trivially follows from the first and $\eqref{difer}$.
The proof of the first is similar to that of \cite[Corollary 1]{FL2}. It is based on the fact that the number of interpolation points on $\Delta_{m-1}$ is $\mathcal{O}(|{\bf n}|), |{\bf n}| \to \infty,$ and that the distance from $\Delta_m$ to $\Delta_{m-1}$ is positive. Relations \eqref{asin1} and \eqref{asin2} are also valid if \eqref{cond1} is replaced with \eqref{cond2}.

Asymptotically, \eqref{cond2} still means that the components of $\bf n$ are equally valued. One can relax \eqref{cond2} requiring, for example, that the generating measures are regular in the sense of \cite[Chapter 3]{stto} in which case the exact asymptotics of \eqref{asin1} and \eqref{asin2} can be given (see \cite{Nik0}, \cite[Chapter 5, Section 7]{NS}, \cite[Theorem 5.1, Corollary 5.3]{FLLS}, and \cite[Theorem 1]{RS1}.
\end{remark}

\begin{remark}  The previous results can be applied to other approximation schemes. Let $S^1= \mathcal{N}(\sigma_0^1,\ldots,\,\sigma^1_{m_1}), S^2
=\mathcal{N}(\sigma^2_0,\ldots,\,\sigma^2_{m_2}), \sigma_0^1 =
\sigma_0^2$ be given.
Fix ${\bf
n}_1=(n_{1,0},\,n_{1,1},\ldots,\,n_{1,m_1})\in{{\mathbb{Z}}}_+^{m_1+1}$
and ${\bf
n}_2=(n_{2,0},\,n_{2,1},\ldots,\,n_{2,m_2})\in{{\mathbb{Z}}}_+^{m_2+1},
|{\bf n}_2| = |{\bf n}_1| -1$. Let ${\bf n} = ({\bf n}_1,{\bf n}_2)$. There exists a non-zero vector polynomial with real coefficients
$(a_{{\bf
n},0},\ldots,a_{{\bf n},m_1}),$ $\deg (a_{{\bf n},k}) \leq
n_{1,k} -1,  k=0,\ldots,m_1,$
such that for $j=0,\ldots,m_2,$
\[
 \int x^{\nu} \mathcal{A}_{{\bf n},0}(x) d s^2_{j}(x)  =0, \qquad \nu =0,\ldots,n_{2,j}
 -1,
\]
where
\[ \mathcal{A}_{{\bf n},0} = a_{{\bf n},0}  + \sum_{k=1}^{m_1} a_{{\bf n},k} \widehat{s}^1_{1,k}.
\]
In other words
\begin{equation}\label{definition**}
 \int \left(b_{{\bf n},0}(x) + \sum_{j=1}^{m_2} b_{{\bf n},j}(x) \widehat{s}_{1,j}^2(x)\right) \mathcal{A}_{{\bf n},0}(x) d \sigma^2_{0}(x)  =0, \qquad \deg b_{{\bf n},j} \leq n_{2,j} -1.
\end{equation}
This implies that $\mathcal{A}_{{\bf n},0}$ has exactly $|\bf n_2|$ zeros in $\mathbb{C} \setminus \Delta_1^1$ they are all simple and lie in $\stackrel{\circ}{\Delta^1 _0}$ (see \cite[Theorem 1.2]{FL4}. Here $\Delta_0^1 = \mbox{Co}(\supp(\sigma_0^1))$ and $\Delta_1^1 = \mbox{Co}(\supp(\sigma_1^1))$.  Therefore, $\left(a_{{\bf n},0}, \ldots, a_{{\bf n},m}\right)$ is a  type I multi-point Hermite-Pad\'e approximation of $(\widehat{s}_{1,1},\ldots,\widehat{s}_{1,m})$ with respect to $w_{\bf n}$ and the  results of this paper may be applied.

\end{remark}


\begin{thebibliography}{99}

\bibitem{BL} J. Bustamante and G. L\'opez Lagomasino. Hermite-Pad\'e  approximation for Nikishin systems of analytic. functions. Sb. Math. {\bf 77} (1994), 367--384.

\bibitem{Car} T. Carleman. Les fonctions quasi-analytiques. Gauthier Villars, Paris, 1926.

\bibitem{DrSt0} K. Driver and  H. Stahl. Normality in Nikishin systems. Indag. Math. N.S.  5 (1994), 161--187.

\bibitem{DrSt1} K. Driver and H. Stahl, Simultaneous rational approximants
to Nikishin systems. I. Acta Sci. Math. (Szeged)  60 (1995), 245--263.

\bibitem{DrSt2} K. Driver and H. Stahl, Simultaneous rational approximants
to Nikishin systems. II. Acta Sci. Math. (Szeged)  61 (1995), 261--284.

\bibitem{FL1} U. Fidalgo and G. L\'opez Lagomasino. Rate of convergence of generalized Hermite-Pad\'e  approximants of Nikishin systems. Constr. Approx. {\bf 23} (2006), 165--196.

\bibitem{FL2} U. Fidalgo and G. L\'opez Lagomasino. General results on the conv. of multi-point Hermite-Pad\'e approximants of Nikishin systems. Constr. Approx. {\bf 25} (2007), 89--107.

\bibitem{FLLS} U. Fidalgo, A. L\'opez, G. L\'opez Lagomasino, and V.N. Sorokin. Mixed type multiple orthogonal polynomials for two Nikishin systems. Constr. Approx. {\bf 32} (2010), 255--306.

\bibitem{FL3} U. Fidalgo and G. L\'opez Lagomasino. Nikishin systems are perfect. Constr. Approx. {\bf 34} (2011), 297--356.

\bibitem{FL4}
U. Fidalgo and G. L\'opez Lagomasino.   Nikishin systems are perfect. Case of unbounded and touching supports. J. of Approx. Theory 163 (2011), 779--811.

\bibitem{gon} A.A. Gonchar.  On the convergence of generalized
Pad\'e approximants of meromorphic functions. Sb. Math. {\bf 27} (1975),
503--514.

\bibitem{GRS}
A.A. Gonchar, E.A. Rakhmanov, and V.N. Sorokin. On Hermite-Pad\'{e}
approximants for systems of functions of Markov type. Sb. Math. {\bf 188} (1997), 671--696.

\bibitem{KN} M.G. Krein and A.A. Nudel'man. The Markov Moment Problem and Extremal Problems. Transl. of Math. Monog. Vol. 50, Amer. Math. Soc., Providence, R.I., 1977.

\bibitem{Kuij}
A.B.J. Kuijlaars, Multiple orthogonal polynomial ensembles. Contemp.
Math. Vol. 507, 2010, 155--176.

\bibitem{Mar} A.A. Markov. Deux d\'emonstrations de la convergence de certains fractions continues. Acta Math. {\bf 19} (1895), 93--104.

\bibitem{Nik} E.M. Nikishin. On simultaneous Pad\'e approximants. Math. USSR Sb. {\bf 41} (1982), 409--425.

\bibitem{Nik0} E.M. Nikishin. Asymptotics of linear forms for simultaneous Pad\'e approximants. Soviet Math (Izv. VUZ) {\bf 30} (1986) 43--52.

\bibitem{NS}
E.M. Nikishin and V.N. Sorokin. Rational Approximations and Orthogonality,
Amer. Math. Soc., Providence, RI, 1991.

\bibitem{RS1} E.A. Rakhmanov and S.P. Suetin. Asymptotic behaviour of the Hermite–Pad\'e polynomials of the $1$st kind for a pair of functions forming a Nikishin system. Uspekhi Mat. Nauk, {\bf 67} (2012), 177--178


\bibitem{RS2} E.A. Rakhmanov and S.P. Suetin. Distribution of zeros of Hermite-Pad\'e polynomials for a pair of functions forming a Nikishin system. Sb. Math. (submitted).

\bibitem{stto}
 H. Stahl and V. Totik.   General Orthogonal
Polynomials. Cambridge University Press, Cambridge, 1992.

\bibitem{Sti} T.J. Stieltjes. Recherches sur les fractions continues. Ann. Fac. Sci. Univ. Toulouse {\bf 8} (1894) J1--J122, {\bf 9} (1895), A1--A47, reprinted in his Oeuvres Compl\`{e}tes, Tome 2, Noordhoff, 1918, pp. 402--566.


\bibitem{Ass}  W. Van Assche. Analytic number theory and rational approximation, in Coimbra Lecture Notes on Orthogonal Polynomials, A. Branquinho and A. Foulqui\'{e} Eds., pp. 197--229. Nova Science Pub., New York, 2008.

\end{thebibliography}
\end{document}